\theoremstyle{plain}
\newtheorem{theo}{Theorem}[section]
\crefname{theo}{Theorem}{Theorems}
\Crefname{theo}{Theorem}{Theorems}
\newtheorem{prop}[theo]{Proposition}
\crefname{prop}{Proposition}{Propositions}
\Crefname{prop}{Proposition}{Propositions}
\newtheorem{lem}[theo]{Lemma}
\crefname{lem}{Lemma}{Lemmas}
\Crefname{lem}{Lemma}{Lemmas}
\crefname{cor}{Corollary}{Corollaries}
\Crefname{cor}{Corollary}{Corollaries}
\crefname{claim}{Claim}{Claims}
\Crefname{claim}{Claim}{Claims}
\crefname{property}{Property}{Properties}
\Crefname{property}{Property}{Properties}
\crefname{problem}{Problem}{Problems}
\Crefname{problem}{Problem}{Problems}
\theoremstyle{definition}
\crefname{defi}{Definition}{Definitions}
\Crefname{defi}{Definition}{Definitions}
\crefname{notation}{Notation}{Notations}
\Crefname{notation}{Notation}{Notations}
\crefname{convention}{Convention}{Conventions}
\Crefname{convention}{Convention}{Conventions}
\crefname{cond}{Condition}{Conditions}
\Crefname{cond}{Condition}{Conditions}
\crefname{assum}{Assumption}{Assumptions}
\Crefname{assum}{Assumption}{Assumptions}
\theoremstyle{remark}
\newtheorem{rem}[theo]{Remark}
\crefname{rem}{Remark}{Remarks}
\Crefname{rem}{Remark}{Remarks}
\newtheorem{ex}[theo]{Example}
\crefname{ex}{Example}{Examples}
\Crefname{ex}{Example}{Examples}
\crefname{ques}{Question}{Questions}
\Crefname{ques}{Question}{Questions}
\crefname{section}{Section}{Sections}
\Crefname{section}{Section}{Sections}
\crefname{subsection}{Subsection}{Subsections}
\Crefname{subsection}{Subsection}{Subsections}
\crefname{figure}{Figure}{Figures}
\Crefname{figure}{Figure}{Figures}
\newtheorem*{acknowledgement}{Acknowledgement}
\newcommand{\Z}{\mathbb{Z}}
\newcommand{\R}{\mathbb{R}}
\newcommand{\C}{\mathbb{C}}
\newcommand{\quat}{\mathbb{H}}
\newcommand{\CP}{\mathbb{CP}}
\newcommand{\fraks}{\mathfrak{s}}
\newcommand{\frakt}{\mathfrak{t}}
\newcommand{\Diff}{\mathrm{Diff}}
\newcommand{\Homeo}{\mathrm{Homeo}}
\newcommand{\Aut}{\mathrm{Aut}}
\newcommand{\inc}{\hookrightarrow}
\newcommand{\del}{\partial}
\newcommand{\id}{\mathrm{id}}
\newcommand{\Pin}{\operatorname{Pin}}
\newcommand{\tr}{\operatorname{tr}}
\title[Dehn twists on spin 4-manifolds]{Dehn twists and the Nielsen realization problem for spin 4-manifolds}
\author{Hokuto Konno}
\address{Graduate School of Mathematical Sciences, the University of Tokyo, 3-8-1 Komaba, Meguro, Tokyo 153-8914, Japan \\and\\
RIKEN iTHEMS, Wako, Saitama 351-0198, Japan}
\email{konno@ms.u-tokyo.ac.jp}
\begin{document}
\maketitle

\begin{abstract}
We prove that, for a closed oriented smooth spin 4-manifold $X$ with non-zero signature, the Dehn twist about a $(+2)$- or $(-2)$-sphere in $X$ is not homotopic to any finite order diffeomorphism.
In particular, we negatively answer the Nielsen realization problem for each group generated by  the mapping class of a Dehn twist.
We also show that there is a discrepancy between the Nielsen realization problems in the topological category and smooth category for connected sums of copies of $K3$ and $S^{2} \times S^{2}$.
The main ingredients of the proofs are Y. Kato's 10/8-type inequality for involutions and a refinement of it.
\end{abstract}


\section{Main results}
\label{section: main results}

Given a smooth manifold $X$, let $\Diff(X)$ denote the group of diffeomorphisms.
The {\it Nielsen realization problem} asks whether a given finite subgroup $G$ of the mapping class group $\pi_{0}(\Diff(X))$ can be realized as a subgroup of $\Diff(X)$, i.e. whether there exists a (group-theoretic) section $s : G \to \Diff(X)$ of the natural map $\Diff(X) \to \pi_{0}(\Diff(X))$ over $G$.
If there is a section, we say that $G$ is {\it realizable in $\Diff(X)$}.
When $X$ is of $\dim=2$ and oriented closed, which is the classical case of the Nielsen realization problem, Kerckhoff~\cite{Ker83} proved that every $G$ is realizable.

In contrast, Raymond and Scott~\cite{RS77} showed that, in every dimension $\geq 3$, there are nilmanifolds for which the Nielsen realization fails essentially using their non-trivial fundamental groups.
Focusing on dimension 4 and simply-connected manifolds, it was recently proven by Baraglia and the author~\cite{BK21} and Farb and Looijenga~\cite{FL21} that the Nielsen realization fails for $K3$, the underlying smooth 4-manifold of a $K3$ surface.
However, to the best of the author's knowledge, the nilmanifolds in \cite{RS77} and
$K3$ are the only examples of 4-manifolds $X$ that are shown to admit finite subgroups of $\pi_{0}(\Diff(X))$ that are not realizable in $\Diff(X)$.
The purpose of this paper is to expand the list of such 4-manifolds considerably.
In particular, we give infinitely many examples of simply-connected 4-manifolds with distinct intersection forms for which the Nielsen realization fails.

For a general 4-manifold, it is not obvious to find a potential example of non-realizable finite subgroups of the mapping class group.
Following Farb and Looijenga~\cite{FL21}, we consider {\it Dehn twists}, which are sources of interesting examples.
Given a $(+2)$- or $(-2)$-sphere $S$ embedded in a 4-manifold $X$, one has a diffeomorphism $T_{S} : X \to X$ called the Dehn twist, whose mapping class $[T_{S}]$ generates an order 2 subgroup of $\pi_{0}(\Diff(X))$ (see \cref{subsection Dehn twist about pm2spheres}).
Our first main result is:

\begin{theo}
\label{theo: main 1}
Let $X$ be a closed oriented smooth spin 4-manifold with non-zero signature and $S$ be a smoothly embedded 2-sphere in $X$ with $[S]^{2}=2$ or $[S]^{2}=-2$.
Then the Dehn twist $T_{S} : X \to X$ about $S$ is not homotopic to any finite order diffeomorphism of $X$.
In particular, the order 2 subgroup of $\pi_{0}(\Diff(X))$ generated by the mapping class $[T_{S}]$ is not realizable in $\Diff(X)$.
\end{theo}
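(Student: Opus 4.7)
The plan is to argue by contradiction. Assume $T_S$ is homotopic to a finite-order diffeomorphism $f \in \Diff(X)$, say of order $n$. Because $[T_S]$ has order exactly $2$ in $\pi_0(\Diff(X))$ and $[f]=[T_S]\neq[\id]$, the order $n$ is even; set $\iota := f^{n/2}$, a smooth involution of $X$. Its mapping class $[T_S]^{n/2}$ equals $[T_S]$ when $n/2$ is odd and $[\id]$ when $n/2$ is even, so the contradiction must be derived in both cases.

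The next step is to upgrade $\iota$ to a \emph{spin} involution. Because a tubular neighborhood of $S$ is simply-connected, $T_S$ acts trivially on the (finite) set of spin structures of $X$; hence $\iota$ preserves a chosen spin structure $\fraks$ up to isotopy and admits a lift to the principal $\Spin(4)$-bundle of $X$. Such a lift is either an honest involution or has order $4$ on the spinor bundle; this is precisely the setting in which Y.~Kato's $10/8$-type inequality, together with the refinement advertised in the abstract, applies.

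The heart of the argument is to combine Kato's inequality with precise information about the action of $\iota$ on $H^{2}(X;\R)$. By construction, $\iota^{*}$ is either the Picard--Lefschetz reflection $x \mapsto x \mp (x \cdot [S])[S]$ induced by $T_S$, or the identity. In the first case the $(-1)$-eigenspace of $\iota^*$ on $H^2(X;\R)$ is one-dimensional, spanned by $[S]$, and since $[S]^2 = \pm 2$, this $(-1)$-eigenspace lies in $H^{+}(X;\R)$ when $[S]^2 = +2$ and in $H^{-}(X;\R)$ when $[S]^2 = -2$; in the second case the $(-1)$-eigenspace is zero. Feeding these very restrictive splittings of $H^2(X;\R)$ into the equivariant signature and the Kato-type bound on $|\sigma(X)|$ should force a numerical contradiction with the standing assumption $\sigma(X) \neq 0$.

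The step I expect to be the main obstacle is the case $n/2$ even, where $\iota$ is a non-trivial involution homotopic to the identity and therefore acts trivially on $H^{*}(X)$. Here the naive form of Kato's inequality carries no reflection information and is far too weak, and one genuinely needs the refinement promised in the abstract; that refinement presumably exploits the fact that $\iota$ sits inside the cyclic group generated by $f \simeq T_S$, perhaps through a family Bauer--Furuta argument along the homotopy $f \simeq T_S$ that remembers the spin-lift data inherited from the Dehn twist. Identifying the correct ``type'' (order $2$ vs.\ order $4$) of the spin lift of $\iota$, on which the sharp form of Kato's inequality depends sensitively, is the other delicate point.
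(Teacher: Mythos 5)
Your overall strategy---argue by contradiction, pass from the finite order diffeomorphism $f$ to the involution $\iota=f^{n/2}$, verify $\iota$ preserves the spin structure, and feed the resulting eigenspace data into a Kato-type $10/8$ inequality---is exactly the paper's route, via \cref{theo: constraint from Kato}. Your computation of the $(-1)$-eigenspace of $\varphi=(T_S)_*$ and of $b_\pm^\varphi$ is also correct. However, there are two genuine gaps, one of which inverts the actual logic of the proof.

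First, you have misidentified where the refined inequality is needed. After reversing orientation to arrange $\sigma(X)<0$ (a bookkeeping step you omit, and which turns a $(\pm2)$-sphere into a $(\mp2)$-sphere, so one must handle both signs of $[S]^2$ with $\sigma<0$), the case you flag as ``the main obstacle''---$\iota_*=\mathrm{id}$---is in fact the \emph{easy} case: there $b_+(X)-b_+^\iota(X)=0$, so the unrefined Kato inequality already reads $-\sigma(X)/16\le 0$, an immediate contradiction with $\sigma(X)<0$. The case that genuinely requires the refinement is the $(+2)$-sphere with $\sigma(X)<0$, where $b_+(X)-b_+^\varphi(X)=1$: the unrefined inequality gives only $-\sigma(X)/16\le1$, i.e.\ $\sigma(X)\ge-16$, which is consistent with $\sigma(X)=-16$ (e.g.\ $K3$). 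The refined inequality $-\sigma(X)/16+1\le b_+(X)-b_+^\iota(X)=1$ forces $\sigma(X)\ge0$, closing this case. Also, the mechanism of the refinement is not a family Bauer--Furuta argument along the homotopy $f\simeq T_S$; it is a new Borsuk--Ulam-type theorem (\cref{theo: Borsuk-Ulam}) in $\Z/4$-equivariant $K$-theory, proved via tom Dieck's formula and a divisibility argument in $R(\Z/4)$, applied to Kato's finite-dimensional approximation of the $I$-invariant Seiberg--Witten map.

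Second, you flag identifying the ``type'' (even vs.\ odd) of the spin lift of $\iota$ as delicate but do not say how it is done, and this is a load-bearing step. The paper's argument (\cref{lem: even involution Gsignature}) is the $G$-signature theorem: an \emph{even} spin involution has no fixed surfaces, so $\sigma^\iota(X)=\sigma(X)/2$. Since here $\sigma^\iota(X)$ equals either $\sigma^\varphi(X)=\sigma(X)\pm1$ (when $\iota_*=\varphi$) or $\sigma(X)$ (when $\iota_*=\mathrm{id}$), and in each case $\sigma^\iota(X)\neq\sigma(X)/2$ using that $\sigma(X)\neq0$ is divisible by $16$, the involution $\iota$ is forced to be of odd type, which is the hypothesis under which Kato's inequality applies. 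Without this step you do not yet know that Kato's theorem is even applicable to $\iota$. A minor further point: since $f$ is only assumed \emph{homotopic} (not isotopic) to $T_S$, you should argue with the action on $H_2$ rather than the mapping class; homotopy determines $f_*=(T_S)_*=\varphi$ of order $2$, so $\langle f\rangle$ surjects onto $\Z/2$ and the order of $f$ is even---but ``$[f]=[T_S]$ in $\pi_0(\Diff(X))$'' is not available.
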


\cref{theo: main 1} generalizes the case when $X=K3$ due to Farb and Looijenga~\cite[Corollary~1.10]{FL21} (see \cref{rem: comparison with FL} for the comparison), and \cref{theo: main 1} immediately implies that the Nielsen realization fails for quite many 4-manifolds, such as $\#_{m}K3\#_{n}S^{2} \times S^{2}$ with $m>0$ and also some infinitely many examples of irreducible 4-manifolds.
See \cref{ex: intro} for details.


\cref{theo: main 1} makes a striking contrast to a recent result by Lee \cite[Corollary~1.5, Remark~1.7]{Lee22}, which implies that the Dehn twist about every $(\pm2)$-sphere in $\CP^{2} \# n(-\CP^{2})$ with $n \leq 8$ is topologically isotopic (hence homotopic) to a smooth involution.
This means that an analogous statement to \cref{theo: main 1} does not hold for {\it non-spin} 4-manifolds.

Another result of this paper concerns a comparison between the Nielsen realization problems in the topological category and the smooth category.
Let $\Homeo(X)$ denote the group of homeomorphisms of a manifold $X$.
As well as the smooth Nielsen realization, we say that a subgroup $G$ of $\pi_{0}(\Homeo(X))$ is {\it realizable in $\Homeo(X)$} if there is a section $s : G \to \Homeo(X)$ of the natural map $\Homeo(X) \to \pi_{0}(\Homeo(X))$ over $G$.
In \cite[Theorem~1.2]{BK21}, Baraglia and the author showed that some order 2 subgroup of $\pi_{0}(\Diff(K3))$ is not realizable in $\Diff(K3)$, even when the corresponding subgroup in $\pi_{0}(\Homeo(K3))$ is realizable in $\Homeo(K3)$.
We generalize this result to connected sums of copies of $K3$ and $S^{2} \times S^{2}$:

\begin{theo}
\label{theo: top smooth Nielsen}
For $m>0$ and $n \geq 0$, set $X = mK3\#nS^{2} \times S^{2}$.
Then there exists an order 2 subgroup $G$ of $\pi_{0}(\Diff(X))$ with the following properties:
\begin{itemize}
\item The group $G$ is not realizable in $\Diff(X)$.
Moreover, a representative of the generator of $G$ is not homotopic to any finite order diffeomorphism of $X$.
\item  The subgroup $G' \subset \pi_{0}(\Homeo(X))$ defined as the image of $G$ under the natural map $\pi_{0}(\Diff(X)) \to \pi_{0}(\Homeo(X))$ is a non-trivial group, and $G'$ is realizable in $\Homeo(X)$.
\end{itemize}
\end{theo}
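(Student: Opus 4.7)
The plan is to combine \cref{theo: main 1} with an explicit construction of a topological involution realizing the Dehn twist up to isotopy. Since $X = mK3\#nS^{2}\times S^{2}$ is spin and has signature $-16m \neq 0$, and any $K3$-summand contains smoothly embedded $(-2)$-spheres, I pick such a sphere $S$ sitting inside one $K3$-summand and set $G = \langle [T_{S}] \rangle \subset \pi_{0}(\Diff(X))$. \cref{theo: main 1} then gives immediately that $G$ is not realizable in $\Diff(X)$ and that the generator $T_{S}$ is not homotopic to any finite order diffeomorphism of $X$, settling the first bullet.

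For the second bullet I first observe that $G'$ is non-trivial. By the Picard--Lefschetz formula, $T_{S}$ acts on $H_{2}(X;\Z)$ as the reflection $x \mapsto x + (x \cdot [S])[S]$, which sends $[S]$ to $-[S]$ and is therefore a non-trivial isometry of the intersection form. Since this $H_{2}$-action is an invariant of the topological isotopy class, $[T_{S}]$ stays non-trivial in $\pi_{0}(\Homeo(X))$, so $G' \cong \Z/2$.

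To realize $G'$ in $\Homeo(X)$ I would construct a topological involution $\tau : X \to X$ lying in the topological isotopy class of $T_{S}$. In the case $X = K3$ this is achieved in \cite{BK21}, where a topological involution $\sigma_{0}$ of $K3$ whose mapping class equals that of a Dehn twist about a $(-2)$-sphere is exhibited. A Lefschetz-number computation with the induced action of $\sigma_{0}$ on $H^{*}(K3)$ (trace $1+20+1 = 22 \neq 0$) shows that the fixed set of $\sigma_{0}$ is non-empty; choosing a fixed point $p$, I would perform equivariant connected sums at $p$ with identity involutions on the remaining $K3$- and $S^{2}\times S^{2}$-summands. The resulting involution $\tau$ of $X$ acts on $H_{2}(X;\Z)$ exactly as the reflection about $[S]$, the same automorphism as $T_{S}$. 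By Quinn's classification, the isotopy class of a homeomorphism of a simply-connected closed topological $4$-manifold is determined by its induced action on $H_{2}$, so $[\tau] = [T_{S}]$ in $\pi_{0}(\Homeo(X))$, and $\langle \tau \rangle \subset \Homeo(X)$ realizes $G'$.

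The main obstacle is the input from \cite{BK21}: the production of a topological involution on $K3$ whose mapping class coincides with that of a Dehn twist about a $(-2)$-sphere. Once this ingredient is granted, the equivariant connected-sum extension (ensuring that the connected-sum location is a nice locally-flat fixed point) and the appeal to Quinn's classification are essentially routine.
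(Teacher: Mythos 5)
The key claim that \cite{BK21} exhibits a topological involution $\sigma_0$ on $K3$ whose mapping class equals that of a Dehn twist about a $(-2)$-sphere is not correct, and this gap is fatal to your proof of the second bullet. The topological involution $f_K$ constructed in \cite[Section 3]{BK21} is obtained as an equivariant connected sum of three copies of $(S^2\times S^2,\text{factor swap})$ with two copies of $-E_8$; it acts on $H_2(K3;\Z)$ with invariant sublattice of rank $11$ and signature $(3,8)$, so $\sigma^{f_K}(K3)=-5$. The Dehn twist $T_S$ about a $(-2)$-sphere acts as a reflection whose invariant sublattice has rank $21$ and signature $(3,18)$, so $\sigma^{T_S}(K3)=-15$. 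Since $(f_K)_* \neq (T_S)_*$, by Quinn--Perron these two homeomorphisms are not topologically isotopic, so $f_K$ does not realize the Dehn twist mapping class. Whether the Dehn twist mapping class on $K3$ is realizable by any locally linear topological involution is a separate question that you do not settle, and it is not addressed in \cite{BK21}.

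For exactly this reason the paper chooses a different $G$: it sets $G = \langle [g]\rangle$ where $g = \#_m g_K \#_n g_S$ with $g_K$ a smooth representative of $s((f_K)_*)$ for the section $s:\Gamma(K3)\to\pi_0(\Diff(K3))$ from \cite{BK21}, so by construction $g_* = f_*$ for the topological involution $f = \#_m f_K \#_n f_S$. Quinn--Perron then gives the topological isotopy between $g$ and $f$, yielding the second bullet, while the first bullet is verified directly from \cref{theo: constraint from Kato} (not from \cref{theo: main 1}, since $g$ is not a Dehn twist), and a non-trivial argument using boundary Dehn twists and \cref{lem: S4 rotation} is needed to show $[g]^2 = 1$ in $\pi_0(\Diff(X))$. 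A secondary problem with your construction: even granting a suitable $\sigma_0$, the proposed ``equivariant connected sum with the identity involution on the remaining summands'' at a fixed point of $\sigma_0$ is not well-defined, since the local model of $\sigma_0$ at any fixed point (a rotation on the normal $\R^2$ of a fixed surface, or $-\id$ at an isolated fixed point) does not match the identity local model; this is precisely why the paper glues in copies of the swap $f_S$ rather than the identity.
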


In other words, a representative $g \in \Diff(X)$ of the generator of $G$ in \cref{theo: top smooth Nielsen} is not homotopic to any finite order diffeomorphism, although $g^{2}$ is smoothly isotopic to the identity and $g$ is topologically isotopic to some topological involution with non-trivial mapping class.
\cref{theo: top smooth Nielsen} gives also an alternative proof of a result by Baraglia~\cite[Proposition~1.2]{B19} about the realization problem along $\Diff(X) \to \Aut(H_{2}(X;\Z))$  (see \cref{section Additional remarks}).

\cref{theo: main 1,theo: top smooth Nielsen} shall be derived from the following constraint on the induced actions of finite order diffeomorphisms on homology.
Let $\sigma(X)$ denote the signature of an oriented closed 4-manifold $X$ and $b_{+}(X)$ denote the maximal-dimension of positive-definite subspaces of $H_{2}(X;\R)$.
For an involution $\varphi$ on the intersection lattice, we denote by $b_{+}^{\varphi}(X)$ (resp. $b_{-}^{\varphi}(X)$) the maximal-dimension of positive-definite (resp. negative-definite) subspaces of the $\varphi$-invariant part $H_{2}(X;\R)^{\varphi}$, and we set $\sigma^{\varphi}(X) = b_{+}^{\varphi}(X) - b_{-}^{\varphi}(X)$.

\begin{theo}
\label{theo: constraint from Kato}
Let $X$ be a closed oriented smooth 4-manifold with $\sigma(X)<0$, and let $\fraks$ be a spin structure on $X$.
Let $g : X \to X$ be a finite order diffeomorphism that preserves orientation of $X$ and $\fraks$, and
let $\varphi : H_{2}(X;\Z)/\mathrm{Tor} \to H_{2}(X;\Z)/\mathrm{Tor}$ denote the action on homology induced from $g$.
Suppose that $\varphi$ is of order 2 and that $\sigma^{\varphi}(X) \neq \sigma(X)/2$.
Then we have
\begin{align}
\label{eq: necessary condition}
-\frac{\sigma(X)}{16} \leq  b_{+}(X)-b_{+}^{\varphi}(X).
\end{align}
Moreover, if $b_{+}(X)-b_{+}^{\varphi}(X) > 0$, we have
\begin{align*}
-\frac{\sigma(X)}{16} +1 \leq  b_{+}(X)-b_{+}^{\varphi}(X).
\end{align*}
\end{theo}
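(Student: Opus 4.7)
The plan is to reduce to the case where $g$ is itself a smooth involution realizing $\varphi$ on homology, and then to apply Y.~Kato's $10/8$-type inequality for spin involutions together with the refinement of it established in the paper.

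First I would extract an involution from $g$. Let $N$ be the order of $g$; because $\varphi$ has order $2$, $N$ is even, and I set $h := g^{N/2}$. Then $h$ is a smooth involution (since $h^{2} = g^{N} = \id$) that inherits from $g$ the property of preserving orientation and the spin structure $\fraks$, and its induced action on $H_{2}(X;\Z)/\mathrm{Tor}$ is $\varphi^{N/2}$, which equals $\varphi$ when $N/2$ is odd and equals $\id$ when $N/2$ is even. In the latter case, $b_{+}^{h}(X) = b_{+}(X)$ and $\sigma^{h}(X) = \sigma(X) \neq \sigma(X)/2$ (the inequality holding because $\sigma(X)<0$), so Kato's inequality applied to the spin involution $h$ would yield $-\sigma(X)/16 \leq b_{+}(X) - b_{+}^{h}(X) = 0$, contradicting $\sigma(X)<0$. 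Hence $N/2$ is forced to be odd, and $h$ realizes $\varphi$ on homology.

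With this involution in hand, the first inequality $-\sigma(X)/16 \leq b_{+}(X) - b_{+}^{\varphi}(X)$ becomes a direct application of Kato's $10/8$-type inequality to the spin involution $(X, h, \fraks)$, whose non-degeneracy hypothesis $\sigma^{h}(X) \neq \sigma(X)/2$ is precisely the standing assumption $\sigma^{\varphi}(X) \neq \sigma(X)/2$. For the ``Moreover'' statement, under the extra assumption $b_{+}(X) - b_{+}^{\varphi}(X) > 0$, I would invoke the refinement of Kato's inequality proved in this paper, which sharpens the bound by an additive~$1$ to yield $-\sigma(X)/16 + 1 \leq b_{+}(X) - b_{+}^{\varphi}(X)$.

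The principal obstacle is not the reduction above, which amounts to a routine algebraic argument with powers of $g$, but the analytic inputs themselves: Kato's original inequality and, more substantially, the $+1$ refinement of it. I expect the refinement to be proved by a careful analysis of the $\Pin(2)$- or $\Z/2$-equivariant Bauer--Furuta stable homotopy invariant for spin involutions, extracting the extra unit from an obstruction-theoretic step on an appropriate equivariant stable map, in the spirit of Furuta's $+1$ improvement in the original $10/8$ theorem.
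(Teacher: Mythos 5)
Your argument reproduces the paper's proof: pass to the involution $\iota = g^{N/2}$, observe $\iota_*$ is $\varphi$ or $\id$ and that in either case $\sigma^{\iota}(X) \neq \sigma(X)/2$, then apply Kato's inequality and its refinement (the paper handles the two subcases uniformly via $b_{+}^{\varphi}(X) \leq b_{+}^{\iota}(X)$ rather than ruling out $\iota_* = \id$ by contradiction, but the two are equivalent). One imprecision to correct: Kato's inequality (\cref{theo: Kato 108}) is stated for involutions of \emph{odd type}, not under the hypothesis $\sigma^{\iota}(X) \neq \sigma(X)/2$ as you describe; the bridge is \cref{lem: even involution Gsignature}, the $G$-signature-theorem fact that an even-type spin involution satisfies $\sigma^{\iota}(X) = \sigma(X)/2$, so the signature assumption forces odd type. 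Your closing speculation about how the refinement is proved is also a bit off --- it comes from a new Borsuk--Ulam-type theorem in $\Z/4$-equivariant $K$-theory (\cref{theo: Borsuk-Ulam}), not $\Pin(2)$- or $\Z/2$-equivariant obstruction theory --- but since you invoke it as a black box this does not affect the argument.
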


The main ingredients of the proof of \cref{theo: constraint from Kato} are Y.~Kato's 10/8-type inequality for involutions \cite{Ka17} (\cref{theo: Kato 108}) coming from Seiberg--Witten theory and a refinement of it (\cref{theo: Kato 108 refine}).
This refinement is necessary to show the ``moreover'' part of \cref{theo: constraint from Kato}, which shall be used to obtain the results on Dehn twists (\cref{theo: main 1}) for both 
$(+2)$- and $(-2)$-spheres.

Here is an outline of the contents  of this paper.
In \cref{section Kato's 10/8-type inequality for involutions}, we recall Kato's 10/8-type inequality for a smooth involution on a spin 4-manifold.
In \cref{section A refinement of Kato's inequality}, we give a refinement of Kato's inequality by proving a new Borsuk--Ulam-type theorem using equivariant $K$-theory.
In \cref{section Proof of theo: constraint from Kato}, we prove \cref{theo: constraint from Kato} based on Kato's inequality and the refinement of it in \cref{section A refinement of Kato's inequality}.
\cref{sectionProof of theo: main 1,section Proof of theo: top smooth Nielsen} are devoted to prove \cref{theo: main 1,theo: top smooth Nielsen} respectively.
We finish off this paper by giving remarks on another kind of Dehn twists and other variants of the Nielsen realization problem in \cref{section Additional remarks}.

\section{Kato's 10/8-type inequality for involutions}
\label{section Kato's 10/8-type inequality for involutions}

Henceforth, for an oriented closed 4-manifold $X$, we identify $H_{2}(X)$ with $H^{2}(X)$ via the Poincar\'{e} duality.
For an involution $\iota$ on $X$, we set $b_{+}^{\iota}(X) = b_{+}^{\iota_{\ast}}(X)$, and similarly define $b_{-}^{\iota}(X)$ and $\sigma^{\iota}(X)$.
Note that, if $X$ has non-vanishing signature, all diffeomorphisms of $X$ are orientation-preserving, namely, we have $\Diff(X) = \Diff^{+}(X)$, the group of orientation-preserving diffeomorphisms.

First, we recall the notion of even and odd involutions following \cite{AB68,Bry98}.
Let $X$ be an oriented closed smooth 4-manifold and $\fraks$ be a spin structure on $X$.
Let $\iota : X \to X$ be an orientation-preserving diffeomorphism of order 2, and suppose that $\iota$ preserves (the isomorphism class of) $\fraks$.
Then there are exactly two lifts of $\iota$ to $\fraks$ as automorphisms of the spin structure.
We have either both lifts are of order 2 or both are of order 4.
We say that the involution $\iota$ is {\it of even type} if the lifts are of order 2, and 
say that $\iota$ is {\it of odd type} if the lifts are of order 4.
When the fixed-point set $X^{\iota}$ is non-empty, the codimension of all components of $X^{\iota}$ are the same, which is either 4 or 2, and
the parity of $\iota$ determines which of them arises: $X^{\iota}$ is of codimension-4 if $\iota$ is of even type, and $X^{\iota}$ is of codimension-2 if $\iota$ is of odd type (\cite[Proposition~8.46]{AB68}, see also \cite{Rub95}).

\begin{lem}
\label{lem: even involution Gsignature}
Let $X$ be an oriented closed smooth 4-manifold and $\fraks$ be a spin structure on $X$.
Let $\iota : X \to X$ be an orientation-preserving diffeomorphism of order 2, and suppose that $\iota$ preserves (the isomorphism class of) $\fraks$ and is of even type.
Then we have $\sigma^{\iota}(X) = \sigma(X)/2$.
\end{lem}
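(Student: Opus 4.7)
The plan is to reduce the claim to the vanishing of the $G$-signature $\sigma(\iota, X)$. Since $\iota$ preserves the intersection form on $H_{2}(X;\R)$, the eigenspaces $H^{\pm}$ of $\iota_{*}$ for the eigenvalues $\pm 1$ are orthogonal, yielding $\sigma(X) = \sigma(H^{+}) + \sigma(H^{-})$ with $\sigma(H^{+}) = \sigma^{\iota}(X)$. On the other hand, decomposing self-dual and anti-self-dual harmonic $2$-forms under $\iota^{*}$ and taking traces gives the $G$-signature
\[
\sigma(\iota, X) = \sigma(H^{+}) - \sigma(H^{-}).
\]
Adding these two identities shows that $\sigma^{\iota}(X) = \sigma(X)/2$ is equivalent to $\sigma(\iota, X) = 0$.

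To establish this vanishing I would invoke the Atiyah--Singer $G$-signature theorem, which expresses $\sigma(\iota, X)$ as a sum of local contributions supported on the fixed-point set $X^{\iota}$. Since $\iota$ is of even type, the result \cite[Proposition~8.46]{AB68} recalled just above guarantees that either $X^{\iota}$ is empty or every component of $X^{\iota}$ is of codimension $4$ in $X$, i.e., an isolated point. If $X^{\iota} = \emptyset$, the vanishing is immediate. Otherwise, at each $p \in X^{\iota}$ the differential $d\iota_{p}$ is an orientation-preserving involution of $T_{p}X \cong \R^{4}$ with no $+1$-eigenvector (else $p$ would fail to be isolated), hence $d\iota_{p} = -\id$. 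Regarding $T_{p}X$ as a complex $2$-plane, the rotation angles are $\theta_{1} = \theta_{2} = \pi$, and the standard isolated-fixed-point contribution
\[
\nu(p) = \prod_{j=1}^{2} \frac{e^{i\theta_{j}} + 1}{e^{i\theta_{j}} - 1}
\]
vanishes because $e^{i\pi} + 1 = 0$. Summing over $p \in X^{\iota}$ then gives $\sigma(\iota, X) = 0$.

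The delicate point is identifying the geometry of the fixed locus: it is the even-type hypothesis (hence the spin structure) that forces every component of $X^{\iota}$ to be $0$-dimensional. Once this codimension-$4$ structure is in hand, the vanishing of each local $G$-signature contribution is an elementary consequence of $e^{i\pi} = -1$, so the proof essentially terminates there. The spin structure enters the argument only indirectly, as the mechanism producing the required fixed-point geometry.
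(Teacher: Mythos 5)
Your proof is correct and follows essentially the same route as the paper: both reduce the claim to the $G$-signature theorem and use that isolated fixed points contribute nothing, with the even-type hypothesis supplying the needed codimension-$4$ structure of $X^{\iota}$. The only stylistic difference is that the paper cites the Hirzebruch--Zagier reformulation in which the $\sigma(X)/2$ term is already isolated (and merely asserts that isolated fixed-point contributions vanish), whereas you derive the equivalence $\sigma^{\iota}(X) = \sigma(X)/2 \Leftrightarrow \sigma(\iota,X)=0$ from the eigenspace decomposition and then verify explicitly that $d\iota_{p} = -\id$ forces the local defect $\prod_{j}(e^{i\theta_{j}}+1)/(e^{i\theta_{j}}-1)$ to vanish since $\theta_{1}=\theta_{2}=\pi$.
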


\begin{proof}
By Hirzebruch's signature theorem (for example, \cite[Equation~(12), page~177]{HZ74}), 
$\sigma^{\iota}(X)$ can be obtained by adding $\sigma(X)/2$ to contributions from fixed surfaces of $\iota$.
(Note that, for a general involution, the contribution from isolated fixed points is zero.)
However, $X^{\iota}$ does not contain surfaces since $\iota$ is even.
\end{proof}

An important ingredient of this paper is the following 10/8-type constraint on odd smooth involutions, proven by Y.~Kato~\cite{Ka17} using Seiberg--Witten theory and $\Z/4$-equivariant $K$-theory:

\begin{theo}[Kato~{\cite[Theorem~2.3]{Ka17}}]
\label{theo: Kato 108}
Let $(X, \fraks)$ be a smooth closed oriented spin 4-manifold.
Let $\iota : X \to X$ be a smooth orientation-preserving involution, and suppose that $\iota$ preserves $\fraks$ and is of odd type.
Then we have
\begin{align}
\label{eq: Kato's inequality}
-\frac{\sigma(X)}{16} \leq b_{+}(X)-b_{+}^{\iota}(X).
\end{align}
\end{theo}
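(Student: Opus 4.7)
My plan is to prove this as an equivariant refinement of Furuta's $10/8$ theorem, using the Bauer--Furuta invariant made equivariant with respect to the involution $\iota$. Since $\iota$ is of odd type preserving $\fraks$, a chosen spin lift $\tilde{\iota}$ has order $4$ with $\tilde{\iota}^{2}$ equal to the non-trivial deck transformation of the spin cover. Combined with the $\Pin(2) = S^{1} \cup j S^{1}$ symmetry intrinsic to the Seiberg--Witten equations on a spin $4$-manifold, this organizes the symmetry of the SW configuration space into a compact Lie group $G$, obtained by amalgamating $\Pin(2)$ with $\langle \tilde{\iota} \rangle \cong \Z/4$ along their common center $\Z/2 = \{\pm 1\}$ (one checks that $\tilde{\iota}$ commutes with the gauge $S^{1}$ and with right multiplication by $j$ on spinors).

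First, I would construct the $G$-equivariant Bauer--Furuta invariant. Finite-dimensional approximation of the SW monopole map yields a pointed $G$-equivariant stable map
\[
\mu : S^{V^{+}} \longrightarrow S^{V^{-}}
\]
between $G$-representation spheres. Each $V^{\pm}$ decomposes as (quaternionic spinor piece) $\oplus$ (real self-dual $2$-form piece), and each piece decomposes further into $\pm 1$ eigenspaces of $\tilde{\iota}$. The total virtual quaternionic dimension of the spinor direction is the Dirac index $-\sigma(X)/16$, and its splitting into $\tilde{\iota}$-eigenspaces is computed via the $G$-spin theorem applied to the codimension-$2$ fixed set $X^{\iota}$. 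The real direction contributes $b_{+}(X)$ in total and $b_{+}^{\iota}(X)$ in the $\tilde{\iota}$-invariant summand, so the anti-invariant summand has real dimension $b_{+}(X) - b_{+}^{\iota}(X)$.

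The main step is the $K$-theoretic non-existence argument that converts the existence of $\mu$ into the desired numerical inequality. Following Furuta's original strategy with the enlarged group $G$, I would apply equivariant complex $K$-theory to the source and target representation spheres, detecting Thom classes by restriction to a suitable subgroup --- in Kato's formulation, the cyclic $\Z/4$ generated by $\tilde{\iota}$ (or the closely related subgroup combining $\tilde{\iota}$ with the central $-1 \in S^{1}$). The $\Z/4$-equivariant $K$-theory of the relevant representation spheres admits explicit generators, and pulling back along $\mu$ one obtains a divisibility/dimension obstruction. Unpacked in terms of the decompositions from the previous step, this obstruction is exactly $-\sigma(X)/16 \leq b_{+}(X) - b_{+}^{\iota}(X)$.

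The hardest part will be the $\Z/4$-equivariant $K$-theoretic bookkeeping: one must identify precisely which characters of $\Z/4$ appear on the spinor and form sides of $\mu$, compute the relevant equivariant Euler classes, and rule out equivariant pointed maps when the dimensional data violates the claimed inequality. The correct identification of $\tilde{\iota}$-eigenspaces on the spinor side --- relying on the lift having order $4$ and on the $G$-spin formula at the fixed $2$-submanifold $X^{\iota}$ --- is the delicate geometric input that places the factor $-\sigma(X)/16$ (rather than a weaker constant) on the left-hand side.
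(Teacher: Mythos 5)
The statement you are trying to prove is cited by the paper directly from Kato's work \cite{Ka17}; the paper does not give its own proof. What the paper does reveal, in the proof of the refined version (\cref{theo: Kato 108 refine}), is the shape of Kato's argument: the odd involution $\iota$ yields an involutive symmetry $I$ on the Seiberg--Witten equations, and it is the \emph{finite-dimensional approximation of the $I$-invariant part} of the monopole map, after complexification, that produces a $\Z/4$-equivariant pointed map
$(\tilde{\C}^{m_{0}} \oplus (\C_{+}\oplus\C_{-})^{n_{0}})^{+} \to (\tilde{\C}^{m_{1}} \oplus (\C_{+}\oplus\C_{-})^{n_{1}})^{+}$
with $m_{1}-m_{0} = b_{+}(X) - b_{+}^{\iota}(X)$ and $n_{0}-n_{1} = -\sigma(X)/16$, to which a Borsuk--Ulam-type obstruction is applied.

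Your sketch has the general ingredients right (finite-dimensional approximation of the monopole map, the order-$4$ spin lift $\tilde\iota$, $\Z/4$-equivariant $K$-theory, the $G$-spin theorem at $X^{\iota}$), but it contains a genuine conceptual gap at the decisive step. You propose to build the Bauer--Furuta map equivariantly for the amalgamated group $G = \Pin(2) \ast_{\Z/2} \Z/4$ on the \emph{full} SW configuration space, and then to extract the inequality by restricting the $G$-equivariant $K$-theory to the $\Z/4$ subgroup. Restricting the symmetry group of a $G$-equivariant map does not shrink the source and target representation spheres: you would still be comparing the full $b_{+}(X)$ on the form side against the full quaternionic Dirac index, and at best recover a Furuta-type bound, not Kato's $b_{+}(X)-b_{+}^{\iota}(X)$. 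The bound you want requires passing to the \emph{fixed-point set} of the monopole map under the involution $I$ (on forms $I = -\iota^{\ast}$, so the $I$-fixed part of $H^{+}$ is the $\iota^{\ast}$-anti-invariant part of dimension $b_{+}-b_{+}^{\iota}$), and only afterwards exploiting the residual $\Z/4$-action on that smaller invariant subcomplex. You observe the eigenspace decomposition $b_{+} = b_{+}^{\iota} + (b_{+}-b_{+}^{\iota})$, but merely recording this decomposition inside a larger equivariant map is not the same as replacing the map by its restriction to a fixed-point subsphere; the latter is what actually changes the target dimension that enters the Borsuk--Ulam argument. Without this reduction step your plan, as written, would not give the stated inequality.
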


\begin{rem}
In \cite{Ka17}, the result corresponding to \cref{theo: Kato 108} is stated using a quantity $b_{+}^{I}(X)$, where $I$ acts on $H^{2}(X;\R)$ as $I=-\iota^{*}$.
Noting the Poincar\'{e} duality, it immediately follows that $b_{+}^{I}(X) = b_{+}(X)-b_{+}^{\iota}(X)$.
\end{rem}

\section{A refinement of Kato's inequality}
\label{section A refinement of Kato's inequality}

To deal with Dehn twists about both of $(+2)$- and $(-2)$-spheres in \cref{theo: main 1}, we shall need the following refinement of Kato's inequality (\cref{theo: Kato 108}), which we call the {\it refined Kato's inequality}:

\begin{theo}
\label{theo: Kato 108 refine}
Let $(X, \fraks)$ be a smooth closed oriented spin 4-manifold.
Let $\iota : X \to X$ be a smooth orientation-preserving involution, and suppose that $\iota$ preserves $\fraks$ and is of odd type.
Suppose that $b_{+}(X)-b_{+}^{\iota}(X) > 0$.
Then we have
\begin{align*}
-\frac{\sigma(X)}{16} +1 \leq b_{+}(X)-b_{+}^{\iota}(X).
\end{align*}
\end{theo}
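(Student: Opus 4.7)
The plan is to refine Kato's argument for \cref{theo: Kato 108} at the level of the equivariant Borsuk--Ulam statement on which it rests. Kato's proof produces, after finite-dimensional approximation of the Seiberg--Witten map for $(X,\fraks)$ together with an order-$4$ lift of $\iota$ (odd type), a $\Z/4$-equivariant map (with the $\Z/4$ combining the lift and the standard $S^{1}$-symmetry of the Seiberg--Witten equations)
\[
f : V \to W
\]
between representation spheres, whose $\Z/4$-isotypical multiplicities encode $-\sigma(X)/16$ in the domain and $b_{+}(X) - b_{+}^{\iota}(X)$ in the codomain, along with further index data. Kato's inequality is then extracted from a $K^{*}_{\Z/4}$-nontriviality argument applied to $f$. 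I would first reproduce this setup, paying special attention to which character of $\Z/4$ carries the anti-invariant summand of $H^{+}(X;\R)$ inside $W$.

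Under the extra hypothesis $b_{+}(X) - b_{+}^{\iota}(X) > 0$, the codomain $W$ then contains at least one additional copy of this sign-type character of $\Z/4$. The key step would be to prove a new Borsuk--Ulam type theorem: for a $\Z/4$-equivariant map between representation spheres of the form relevant here, the presence of at least one such sign-type character in the codomain forces an inequality that is stronger by exactly one than the one sufficient for \cref{theo: Kato 108}. Concretely, one would compute the relevant Bott/Euler classes in $K_{\Z/4}(V)$ and $K_{\Z/4}(W)$, pull them back along $f$, and extract divisibility relations in $R(\Z/4) = \Z[t]/(t^{4}-1)$; the equivariant Euler class of the extra sign character should supply precisely the desired $+1$. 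Feeding this refined inequality into Kato's index computation then yields \cref{theo: Kato 108 refine}.

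The main obstacle is expected to be the equivariant $K$-theoretic computation itself. One must identify the exact character on which the anti-invariant part of $H^{+}(X;\R)$ sits, verify that its equivariant Euler class is not already absorbed by existing relations coming from the index bundle of the Dirac operator in $R(\Z/4)$, and check that it produces a genuine gain of $1$ rather than being cancelled by the $\Z/4$-structure already in play. I expect that handling this will require Adams operations on $R(\Z/4)$ together with a careful application of the $G$-signature / $G$-spin theorem for the odd-type lift $\tilde\iota$, in order to pin down the relevant nonvanishing in equivariant $K$-theory and to rule out degeneracies that would otherwise collapse the argument back to Kato's original bound.
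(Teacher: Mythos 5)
Your overall strategy matches the paper's: reduce to Kato's $\Z/4$-equivariant finite-dimensional model of the Seiberg--Witten map, prove a sharper Borsuk--Ulam-type theorem in $\Z/4$-equivariant $K$-theory that exploits $b_{+}(X)-b_{+}^{\iota}(X)>0$, and feed the result back through Kato's identification $m_{1}-m_{0}=b_{+}(X)-b_{+}^{\iota}(X)$, $n_{0}-n_{1}=-\sigma(X)/16$. The refined Borsuk--Ulam statement (that a pointed $\Z/4$-map $f:(\tilde{\C}^{m_{0}}\oplus(\C_{+}\oplus\C_{-})^{n_{0}})^{+}\to(\tilde{\C}^{m_{1}}\oplus(\C_{+}\oplus\C_{-})^{n_{1}})^{+}$ with $f(0)=0$ and $m_{0}<m_{1}$ forces $n_{0}-n_{1}+1\leq m_{1}-m_{0}$) is exactly what the paper proves. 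So the proposal is on the right track.

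Where your anticipated execution diverges: the paper does not need Adams operations nor a renewed appeal to the $G$-signature/$G$-spin theorem. The only external input is Kato's already-established finite-dimensional approximation; the refinement is purely in the Borsuk--Ulam step, and the tool there is tom Dieck's trace formula for the $K$-theoretic equivariant degree $\alpha_{f}\in R(\Z/4)=\Z[t]/(t^{4}-1)$, evaluated at the two elements $-1$ and $j$. Also, your intuition that the extra copy of $\tilde{\C}$ in the codomain directly contributes an Euler class that yields the ``$+1$'' is a bit off. What actually happens is: $m_{0}<m_{1}$ forces the $(-1)$-fixed-point map $(\tilde{\C}^{m_{0}})^{+}\to(\tilde{\C}^{m_{1}})^{+}$ to have degree $0$, hence $\tr_{-1}(\alpha_{f})=0$. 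Meanwhile $\tr_{j}(\alpha_{f})=2^{m_{1}-m_{0}+n_{1}-n_{0}}$ is real, which already forces the $t$- and $t^{3}$-coefficients of $\alpha_{f}$ to be equal; combining these two linear constraints shows $\tr_{j}(\alpha_{f})$ is twice an integer. The divisibility of $2^{m_{1}-m_{0}+n_{1}-n_{0}}$ by $2$ is what yields the ``$+1$'', not the Euler class of the extra $\tilde{\C}$ per se. So the extra $\tilde{\C}$ enters only indirectly, by killing the degree of the $(-1)$-fixed map. With this correction the proposal lines up with the paper's proof.
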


This shall be proven in \cref{subsec: Proof of theo: Kato 108 refine} using a Borsuk--Ulam-type theorem (\cref{theo: Borsuk-Ulam}), which we first give in \cref{subsection Z4-equivariant K-theory}.

\subsection{$\Z/4$-equivariant $K$-theory}
\label{subsection Z4-equivariant K-theory}

To show \cref{theo: Kato 108 refine}, we prove a new Borsuk--Ulam-type theorem using $\Z/4$-equivariant $K$-theory.
As in Kato's argument \cite{Ka17}, the following approach is modeled on Bryan's argument~\cite{Bry98} for $\mathrm{Pin}(2)$-equivariant $K$-theory.
A difference from Kato's argument is that we shall use the structure of the whole representation ring $R(\Z/4)$.

Set $G=\Z/4$ and let $j$ denote a generator: $G=\{1, j, -1, -j\}$.
(The symbol $j$ stands for a unit quaternion $j \in \Pin(2) \subset \quat$, which is a symmetry that the Seiberg--Witten equations admit.)
Let $\C, \C_{+}$ and $\C_{-}$ be complex 1-dimensional representations of $G$ determined by \[
\tr_{j}\C=1,\quad \tr_{j}\C_{+}=i,\quad \tr_{j}\C_{-}=-i,
\]
where $\tr_{j}$ denotes the trace of the action of $j$ and $i=\sqrt{-1}$.
Namely, $\C$ is the trivial 1-dimensional representation, and $\C_{\pm}$ are representations given as $\pm i$-multiplication of the fixed generator of $G$.
Let $\tilde{\R}$ denote a real 1-dimensional representation of $G$ defined through the surjective homomorphism $G \to \Z/2$ and multiplication of $\Z/2=\{\pm1\}$.
Set $\tilde{\C}= \tilde{\R}\otimes_{\R}\C$.
Recall that the complex representation ring $R(G)$ is given by
\begin{align}
\label{eq: R(G)}
R(G) = \Z[t]/(t^{4}-1),
\end{align}
where $t=\C_{+}$.

Here we recall a general fact, which holds for any compact Lie group $G$, called tom  Dieck's formula by Bryan~\cite{Bry98}.
Let $V, W$ be finite-dimensional unitary representations of $G$. 
Let $V^{+}$ denote the one-point compactification of $V$, naturally acted by $G$.
We regard the point at infinity as the base point of $V^{+}$.
Let $f : V^{+} \to W^{+}$ be a pointed $G$-continuous map.
By the equivariant $K$-theoretic Thom isomorphism, we have that $\tilde{K}_{G}(V^{+}), \tilde{K}_{G}(W^{+})$ are free $\tilde{K}_{G}(S^{0})=R(G)$-modules generated by the equivariant $K$-theoretic Thom classes $\tau_{G}^{K}(V), \tau_{G}^{K}(W)$ respectively, and thus one may define the equivariant $K$-theoretic mapping degree $\alpha_{f} \in R(G)$ of $f$ characterized by
\[
f^{\ast}\tau_{G}^{K}(W) = \alpha_{f}\tau_{G}^{K}(V).
\]
For an element $g \in G$, let $V^{g}, W^{g}$ denote the fixed-point set for $g$, and 
let $(V^{g})^{\perp}, (W^{g})^{\perp}$ denote the orthogonal complement of $V^{g}, W^{g}$ in $V, W$ respectively. 
Let $d(f^{g}) \in \Z$ denote the mapping degree, defined using just the ordinary cohomology, of the fixed-point set map $f^{g} : (V^{g})^{+} \to (W^{g})^{+}$.
For $\beta \in R(G)$, define $\lambda_{-1}\beta \in R(G)$ to be $\sum_{i \geq 0}(-1)^{i}\Lambda^{i}\beta$.
tom  Dieck's formula is:

\begin{prop}[{\cite[Proposition~9.7.2]{tD79}}, see also {\cite[Theorem 3.3]{Bry98}}]
\label{tD formula}
In the above setup, we have
\[
\tr_{g}(\alpha_{f}) = d(f^{g})\tr_{g}(\lambda_{-1}((W^{g})^{\perp}-(V^{g})^{\perp})).
\]
\end{prop}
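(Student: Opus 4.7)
The plan is to prove tom Dieck's formula by restricting the defining identity $f^{*}\tau_{G}^{K}(W) = \alpha_{f}\tau_{G}^{K}(V)$ to the $g$-fixed-point compactifications, applying multiplicativity of the equivariant K-theoretic Thom class under direct sum decompositions, and then taking the character $\tr_{g}$ of both sides.

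First, consider the inclusion $i_{V} : (V^{g})^{+} \hookrightarrow V^{+}$ and its analog $i_{W}$ for $W$. Since $f$ is $G$-equivariant, it restricts to $f^{g} : (V^{g})^{+} \to (W^{g})^{+}$, and $f \circ i_{V} = i_{W} \circ f^{g}$. Pulling back the defining identity along $i_{V}$ and using naturality gives
\begin{equation*}
(f^{g})^{*} i_{W}^{*} \tau_{G}^{K}(W) = \alpha_{f} \cdot i_{V}^{*} \tau_{G}^{K}(V)
\end{equation*}
in $\tilde{K}_{G}((V^{g})^{+})$. The key geometric input is the orthogonal splitting $V = V^{g} \oplus (V^{g})^{\perp}$, which yields the smash decomposition $V^{+} \cong (V^{g})^{+} \wedge ((V^{g})^{\perp})^{+}$, under which the Thom class factors as $\tau_{G}^{K}(V) = \tau_{G}^{K}(V^{g}) \wedge \tau_{G}^{K}((V^{g})^{\perp})$. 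Under this splitting, $i_{V}$ is the smash of the identity on $(V^{g})^{+}$ with the pointed inclusion of $0 \in (V^{g})^{\perp}$, and pulling back $\tau_{G}^{K}((V^{g})^{\perp})$ along the latter yields the K-theoretic Euler class $\lambda_{-1}((V^{g})^{\perp}) \in R(G) = \tilde{K}_{G}(S^{0})$, and similarly for $W$.

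Second, since $G$ acts trivially on the fixed-point sets, Bott periodicity identifies $\tilde{K}_{G}((V^{g})^{+}) = R(G) \cdot \tau_{G}^{K}(V^{g})$ as a free $R(G)$-module of rank one, and analogously for $W$. The map $f^{g}$ is a map between spheres with trivial $G$-action, so its equivariant K-theoretic degree equals its ordinary topological degree $d(f^{g})$. Combining these observations with the Thom class calculation, the pulled-back identity reduces to
\begin{equation*}
d(f^{g}) \cdot \lambda_{-1}((W^{g})^{\perp}) = \alpha_{f} \cdot \lambda_{-1}((V^{g})^{\perp}) \quad \text{in } R(G).
\end{equation*}

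Finally, I would apply the character homomorphism $\tr_{g} : R(G) \to \C$ to both sides. Because $(V^{g})^{\perp}$ contains no trivial $g$-subrepresentation, all eigenvalues $\zeta_{i}$ of $g$ on $(V^{g})^{\perp}$ satisfy $\zeta_{i} \neq 1$, so $\tr_{g}(\lambda_{-1}((V^{g})^{\perp})) = \prod_{i}(1-\zeta_{i}) \neq 0$. Dividing yields the claimed formula, with $\tr_{g}(\lambda_{-1}(A-B))$ interpreted as $\tr_{g}(\lambda_{-1}(A))/\tr_{g}(\lambda_{-1}(B))$. The main obstacle is the careful identification of the equivariant K-theoretic Thom class under the smash decomposition $V^{+} = (V^{g})^{+} \wedge ((V^{g})^{\perp})^{+}$ and the precise computation of its pullback as the Euler class $\lambda_{-1}((V^{g})^{\perp})$; this is a standard fact but requires some care with equivariant orientation/sign conventions, as different sources use slightly different forms of $\lambda_{-1}$ (on $U$ versus $U^{*}$).
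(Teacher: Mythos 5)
The paper does not prove this proposition; it is quoted from tom~Dieck and from Bryan. Your proof is essentially the standard fixed-point argument one would find in those sources (restrict the defining degree identity to the $g$-fixed sphere, factor off the normal Euler class via the smash decomposition, take $\tr_g$), and the overall strategy is sound. There are, however, two points that need to be tightened.

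First, your claim that ``$G$ acts trivially on the fixed-point sets'' is false, and it invalidates the intermediate identity $d(f^g)\,\lambda_{-1}((W^g)^\perp)=\alpha_f\,\lambda_{-1}((V^g)^\perp)$ asserted in $R(G)$. Here $V^g$ is the fixed set of the single element $g$; only the cyclic subgroup $\langle g\rangle$ acts trivially on it, while the rest of $G$ generally does not. Concretely, in the application with $G=\Z/4$ and $g=-1$, one has $V^{-1}=\tilde\C^{m_0}$, and $j$ acts on $\tilde\C$ by $-1$, so the $G$-action on $V^{-1}$ is non-trivial. Consequently $(f^g)^*\tau_G^K(W^g)=\beta\cdot\tau_G^K(V^g)$ for some $\beta\in R(G)$ that need not equal the integer $d(f^g)$. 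The remedy is to restrict from $G$ to $H=\langle g\rangle$ before making this step: the Thom classes, the degree $\alpha_f$, and the orthogonal splitting all restrict compatibly to $H$-equivariant $K$-theory, $H$ genuinely acts trivially on $V^g$ and $W^g$, the $H$-equivariant degree of $f^g$ is then the ordinary degree $d(f^g)\in\Z\subset R(H)$, and the character $\tr_g\colon R(G)\to\C$ factors through the restriction $R(G)\to R(H)$. With that adjustment your division argument using $\tr_g(\lambda_{-1}((V^g)^\perp))=\prod(1-\zeta_i)\neq 0$ goes through and yields the formula.

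Second, a more minor gap: when $\dim_\C V^g\neq\dim_\C W^g$ you assert without comment that the equivariant $K$-theoretic degree of $f^g$ coincides with the ordinary topological degree. Both are indeed zero in that case --- if $\dim V^g<\dim W^g$ then $f^g$ is null-homotopic since $\pi_{2a}(S^{2b})=0$ for $a<b$, and if $\dim V^g>\dim W^g$ then $[f^g]$ lies in a finite homotopy group which must map trivially into the torsion-free group $\tilde K((V^g)^+)\cong\Z$ --- but this deserves an explicit sentence, since the unequal-dimension case is exactly the one invoked for $g=-1$ in the proof of \cref{theo: Borsuk-Ulam}. Your final caveat about the $\lambda_{-1}$ convention (on $U$ versus $U^*$) is well taken; the paper's own computation in \eqref{eq: 1st trj} is consistent with the convention $e(U)=\lambda_{-1}(U)$.
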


Now we are ready to prove the Borsuk--Ulam-type theorem we need:

\begin{theo}
\label{theo: Borsuk-Ulam}
Let $G=\Z/4$.
For natural numbers $m_{0}, m_{1}, n_{0}, n_{1} \geq 0$ with $m_{0}<m_{1}$,
suppose that there exists a $G$-equivariant pointed continuous map
\begin{align}
\label{eq: f in BU}
f : (\tilde{\C}^{m_{0}} \oplus (\C_{+}\oplus\C_{-})^{n_{0}})^{+}
\to (\tilde{\C}^{m_{1}} \oplus (\C_{+}\oplus\C_{-})^{n_{1}})^{+}
\end{align}
with $f(0)=0$.
Then we have
\begin{align}
\label{eq: BU desired ineq}
n_{0}-n_{1}+1 \leq m_{1}-m_{0}.
\end{align}
\end{theo}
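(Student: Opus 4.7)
The plan is to apply tom Dieck's formula (\cref{tD formula}) to the equivariant $K$-theoretic mapping degree $\alpha_{f} \in R(G) = \Z[t]/(t^{4}-1)$ of $f$, evaluating its character at all four elements of $G = \{1, j, -1, -j\}$ and then extracting an integrality constraint on $R(G)$.

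First, I would record the fixed-point structure. Write $V = \tilde{\C}^{m_{0}} \oplus (\C_{+}\oplus\C_{-})^{n_{0}}$ and likewise for $W$. Since $j$ acts as $-1$ on $\tilde{\C}$ and as $\pm i$ on $\C_{\pm}$, we get $V^{j} = V^{-j} = W^{j} = W^{-j} = 0$, whereas $V^{-1} = \tilde{\C}^{m_{0}}$ and $W^{-1} = \tilde{\C}^{m_{1}}$. Consequently $f^{\pm j}$ are pointed self-maps of $S^{0}$ sending $0$ to $f(0) = 0$, so both have ordinary degree $1$; and $f^{-1} : S^{2m_{0}} \to S^{2m_{1}}$ is forced to have degree $0$ because $m_{0} < m_{1}$. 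This dimension count is the only place the hypothesis $m_{0} < m_{1}$ enters.

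Next, I would compute the $\lambda_{-1}$-characters. For a $1$-dimensional representation $L$ one has $\tr_{g}\lambda_{-1}(L) = 1 - \tr_{g}L$, and a direct evaluation gives
\[
\tr_{j}\lambda_{-1}(V) = 2^{m_{0}}(1-i)^{n_{0}}(1+i)^{n_{0}} = 2^{m_{0}+n_{0}},
\]
and similarly $\tr_{-j}\lambda_{-1}(V) = 2^{m_{0}+n_{0}}$, with the analogous expressions for $W$. Substituting into \cref{tD formula} yields
\[
\tr_{j}\alpha_{f} = \tr_{-j}\alpha_{f} = 2^{N}, \qquad \tr_{-1}\alpha_{f} = 0,
\]
where $N := (m_{1}-m_{0}) + (n_{1}-n_{0})$.

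Finally, I would read off the inequality from the integer structure of $R(G)$. Writing $\alpha_{f} = a_{0} + a_{1} t + a_{2} t^{2} + a_{3} t^{3}$ with $a_{i} \in \Z$ and plugging into the character values above, the constraints become $a_{0} - a_{2} = 2^{N}$, $a_{1} = a_{3}$, and $a_{0} + a_{2} = 2 a_{1}$, whence $2a_{0} - 2a_{1} = 2^{N}$. Gaussian integrality of $\tr_{j}\alpha_{f}$ immediately forces $N \geq 0$, and divisibility by $2$ of the left-hand side rules out $N = 0$; hence $N \geq 1$, which is exactly $n_{0} - n_{1} + 1 \leq m_{1} - m_{0}$. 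The main subtlety lies in the $N = 0$ step: unlike Kato's original argument using only the quaternionic character in $R(\Pin(2))$, the extra $+1$ requires combining the vanishing $\tr_{-1}\alpha_{f} = 0$ with the full character table of $R(\Z/4)$ to produce a parity obstruction.
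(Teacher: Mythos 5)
Your proof is correct and follows essentially the same route as the paper: compute $\alpha_f \in R(\Z/4)$, use $d(f^{-1})=0$ to force $\tr_{-1}(\alpha_f)=0$, combine this with $\tr_j(\alpha_f)=2^N$ and the integer coefficients of $\alpha_f$ to deduce $\tr_j(\alpha_f)$ is an even integer, hence $N\geq 1$. One small inaccuracy in your closing remark: Kato's argument in \cite{Ka17} already uses $\Z/4$-equivariant (not $\Pin(2)$-equivariant) $K$-theory; what the present paper adds is exploiting the full character table of $R(\Z/4)$, in particular the vanishing at $-1$, to extract the parity refinement.
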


\begin{rem}
This Borsuk--Ulam-type theorem, \cref{theo: Borsuk-Ulam}, may be of independent interest.
Especially, it is worth noting that \cref{theo: Borsuk-Ulam} allows us to give a proof of Furuta's celebrated 10/8-inequality~\cite{Fu01} using only the $\Z/4$-symmetry of the Seiberg--Witten equations, while the original proof used a bigger symmetry, $\Pin(2)$.
See \cref{rem: 10/8 recover} for further comments on this.

Also, \cref{theo: Borsuk-Ulam} generalizes a result by Pfister and Stolz~\cite[THEOREM, page 286]{PS87}, where they proved \cref{theo: Borsuk-Ulam} for the case that $m_{0}=0$ and $n_{1}=0$.
The argument of Pfister--Stolz is also based on $K$-theory, but in a slightly different way from us.
\end{rem}

\begin{proof}[Proof of \cref{theo: Borsuk-Ulam}]
Let $\alpha = \alpha_{f} \in R(G)$ denote the equivariant $K$-theoretic mapping degree of $f$.
We shall obtain constraints on $\alpha$ from the actions of $-1$ and $j$.
First, note that the $(-1)$-fixed point set map for $f$ is given as $f^{-1} : (\tilde{\C}^{m_{0}})^{+} \to (\tilde{\C}^{m_{1}})^{+}$, and thus the assumption $m_{0}<m_{1}$ implies $d(f^{-1})=0$.
Hence it follows from \cref{tD formula} that $\tr_{-1}(\alpha)=0$.
Thanks to the ring structure \eqref{eq: R(G)} of $R(G)$, $\alpha$ can be expressed in the form 
\begin{align}
\label{eq: expression of degree}
\alpha = \sum_{k=0}^{3}a_{k}t^{k},
\end{align}
where $a_{k} \in \Z$.
Since $\tr_{-1}(t)=-1$, it follows that $\tr_{-1}(\alpha) = a_{0}-a_{1}+a_{2}-a_{3}$.
Thus we have
\begin{align}
\label{eq: first eq}
a_{0}-a_{1}+a_{2}-a_{3}=0.
\end{align}

Next, let us consider the $j$-action on $\alpha$.
First, note that $f^{j}$ is just the identity map on $S^{0} = \{0\} \cup \{\infty\}$, and hence $d(f^{j})=1$.
In general, for complex rank 1 (virtual) representations $L_{1}, \ldots, L_{N} \in R(G)$, one has $\lambda_{-1}(\sum_{i=1}^{N}L_{i}) = \prod_{i=1}^{N}\lambda_{-1}L_{i}$.
Thus, again using \cref{tD formula}, we have
\begin{align}
\label{eq: 1st trj}
\begin{split}
\tr_{j}(\alpha)
&= \tr_{j}\left(\lambda_{-1}\left(\tilde{\C}^{m_{1}-m_{0}} 
\oplus (\C_{+}\oplus \C_{-})^{n_{1}-n_{0}}\right)\right)\\
&= \tr_{j}\left(\lambda_{-1}\left((m_{1}-m_{0})t^{2}+(n_{1}-n_{0})t + (n_{1}-n_{0})t^{3}\right)\right)\\
&=\tr_{j}\left( (1-t^{2})^{m_{1}-m_{0}}(1-t)^{n_{1}-n_{0}}(1-t^{3})^{n_{1}-n_{0}} \right)\\
&=(1+1)^{m_{1}-m_{0}}(1-i)^{n_{1}-n_{0}}(1+i)^{n_{1}-n_{0}}
= 2^{m_{1}-m_{0}+n_{1}-n_{0}}.
\end{split}
\end{align}
On the other hand, from the expression \eqref{eq: expression of degree} of $\alpha$, 
we have $\tr_{j}(\alpha) = a_{0}-a_{2} + (a_{1}-a_{3})i$.
Since $\tr_{j}(\alpha) \in \R$ by \eqref{eq: 1st trj}, we have $a_{1}-a_{3}=0$, and this combined with \eqref{eq: first eq} implies that
\begin{align}
\label{eq: 2nd trj}
\tr_{j}(\alpha) = a_{0}-a_{2} = 2(a_{1}-a_{2}).
\end{align}
Since $a_{1}-a_{2} \in \Z$, the desired inequality \eqref{eq: BU desired ineq} follows from \eqref{eq: 1st trj} and \eqref{eq: 2nd trj}.
\end{proof}

Note that the divisibility by 2 over $\Z$ of the right-hand side of \eqref{eq: 2nd trj} contributes to the ``$+1$''-term in the inequality \eqref{eq: BU desired ineq}, which is the source of the refined Kato's inequality.

\subsection{Proof of \cref{theo: Kato 108 refine}}
\label{subsec: Proof of theo: Kato 108 refine}

Now we are ready to prove the refined Kato's inequality:

\begin{proof}[Proof of \cref{theo: Kato 108 refine}]
Set $G=\Z/4$.
Kato  proved in \cite{Ka17} that the odd involution $\iota$ gives rise to an involutive symmetry $I$ on the Seiberg--Witten equations on $(X,\fraks)$, and the complexification of a finite-dimensional approximation of the $I$-invariant part of the Seiberg--Witten equations is a $G$-equivariant pointed continuous map $f$ of the form \eqref{eq: f in BU} with $f(0)=0$, where the natural numbers $m_{0}, m_{1}, n_{0}, n_{1}$ in \eqref{eq: f in BU} satisfy
\begin{align*}
m_{1}-m_{0} = b_{+}(X) - b_{+}^{\iota}(X),\quad
n_{0}-n_{1} = -\frac{\sigma(X)}{16}.
\end{align*}
By the assumption $b_{+}(X) - b_{+}^{\iota}(X)>0$, we may apply \cref{theo: Borsuk-Ulam} to this $f$, and this completes the proof of \cref{theo: Kato 108 refine}.
\end{proof}

\begin{rem}
\label{rem: 10/8 recover}
Furuta's 10/8-inequality \cite{Fu01} was proved using the $\Pin(2)$-symmetry of the Seiberg--Witten equations for a closed spin 4-manifold $X$.
Using our Borsuk--Ulam-type theorem, \cref{theo: Borsuk-Ulam}, we may recover Furuta's 10/8-inequality using only the $\Z/4$-symmetry of the Seiberg--Witten equations as follow.
Note that $G=\Z/4=\left<j\right>$ is a subgroup of $\Pin(2)=S^{1} \cup jS^{1} \subset \quat$.
Restricting the $\Pin(2)$-symmetry to the $\Z/4$-symmetry in Furuta's construction \cite{Fu01}, we have that the complexification of a finite-dimensional approximation of the Seiberg--Witten equations is a $G$-equivariant pointed continuous map $f$ of the form \eqref{eq: f in BU} with $f(0)=0$ for natural numbers $m_{0}, m_{1}, n_{0}, n_{1}$ with
\begin{align*}
m_{1}-m_{0} = b_{+}(X),\quad
n_{0}-n_{1} = -\frac{\sigma(X)}{8}.
\end{align*}
Applying \cref{theo: Borsuk-Ulam} to $f$, we obtain 
\[
-\frac{\sigma(X)}{8} + 1 \leq b_{+}(X),
\]
provided that $b_{+}(X)>0$.
This inequality is equivalent to the 10/8-inequality \cite[Theorem~1]{Fu01}.
\end{rem}

\section{Proof of \cref{theo: constraint from Kato}}
\label{section Proof of theo: constraint from Kato}

Now we are ready to prove \cref{theo: constraint from Kato}:

\begin{proof}[Proof of \cref{theo: constraint from Kato}]
First, we reduce the problem to involutions following \cite[Proof of Corollary~1.10]{FL21}.
Since the subgroup of $\Diff(X)$ generated by $g$ has a surjective homomorphism onto $\left<\varphi\right> \cong \Z/2$, we have that the order of $g$ is even.
Let $2m$ be the order of $g$, then $g^{m}$ is a smooth involution.
Set $\iota = g^{m}$.
Since $g_{\ast} = \varphi$ is of order 2, we have that either $\iota_{\ast} = \varphi$ or  $\iota_{\ast} = \id$.
By the condition that $g^{\ast}\fraks \cong \fraks$, $\iota$ also preserves $\fraks$.

If $\iota_{\ast} = \varphi$, we have $\sigma^{\iota}(X) \neq \sigma(X)/2$ from the assumption that $\sigma^{\varphi}(X) \neq \sigma(X)/2$.
If $\iota_{\ast} = \id$, we have $\sigma^{\iota}(X) \neq \sigma(X)/2$ since we supposed $\sigma(X) \neq 0$.
Thus, in any of these cases, we have $\sigma^{\iota}(X) \neq \sigma(X)/2$ and hence it follows from \cref{lem: even involution Gsignature} that $\iota$ is of odd type.
Hence it follows from Kato's inequality, \cref{theo: Kato 108}, that
\begin{align}
\label{eq: proof of 1.3}
-\frac{\sigma(X)}{16} \leq b_{+}(X)-b_{+}^{\iota}(X) \leq b_{+}(X)-b_{+}^{\varphi}(X).
\end{align}

To see the ``moreover'' part of the \lcnamecref{theo: constraint from Kato}, suppose that $b_{+}(X)-b_{+}^{\iota}(X)>0$, then we can replace the left-hand side of \eqref{eq: proof of 1.3} with $-\sigma(X)/16 +1$ by the refined Kato's inequality, \cref{theo: Kato 108 refine}.
This completes the proof.
\end{proof}

\section{Proof of \cref{theo: main 1}}
\label{sectionProof of theo: main 1}

\subsection{Dehn twists about $(\pm2)$-spheres}
\label{subsection Dehn twist about pm2spheres}

First, we recall 4-dimensional Dehn twists associated with $(\pm2)$-spheres.
We refer readers to a lecture note by Seidel~\cite[Section~2]{Sei08} for details.
While the construction of the Dehn twist in \cite{Sei08} is described for a Lagrangian sphere in a symplectic 4-manifold, which is always a $(-2)$-sphere, the construction works for any $(-2)$-sphere in a general 4-manifold without any change, and it is easy to obtain an analogous diffeomorphism also for a $(+2)$-sphere, described below.

Given a $(-2)$-sphere $S$ in an oriented 4-manifold $X$, namely a smoothly embedded 2-dimensional sphere $S$ with $[S]^{2}=-2$, one may construct a diffeomorphism $T_{S} : X \to X$ called the {\it Dehn twist} about $S$, which is supported in a tubular neighborhood of $S$ in $X$, as follows.
First, note that a tubular neighborhood of $S$ is diffeomorphic to $T^{\ast}S^{2}$ since $S$ is a $(-2)$-sphere, and fix an embedding $T^{\ast}S^{2} \inc X$.
The Dehn twist $T_{S}$ is the extension by the identity of some compactly supported diffeomorphism $\tau$ of $T^{\ast}S^{2}$ called the {\it model Dehn twist}, which is given as the monodromy around the nodal singular fiber of the family $\C^{3} \to \C, (z_{1},z_{2},z_{3}) \mapsto z_{1}^{2}+z_{2}^{2}+z_{3}^{2}$ over the origin of $\C$.
The model Dehn twist $\tau$ acts on the zero-section $S^{2}$ as the antipodal map and $\tau^{2}$ is smoothly isotopic to the identity through compactly supported diffeomorphisms of $T^{\ast}S^{2}$ \cite[Proposition~2.1]{Sei08}.
Hence the induced action of $T_{S}$ on homology is non-trivial,
more precisely, $(T_S)_\ast : H_{2}(X;\Z) \to H_{2}(X;\Z)$ is given as 
\[
(T_S)_\ast(x) = x + (x \cdot [S])[S],
\]
and $T_{S}^{2}$ is smoothly isotopic to the identity.
Thus the mapping class $[T_{S}]$ is non-trivial and it generates an order 2 subgroup of $\pi_{0}(\Diff(X))$.

Next, consider the situation that a $(+2)$-sphere $S$ in an oriented 4-manifold $X$ is given.
Then a tubular neighborhood of $S$ is diffeomorphic to $TS^{2}$.
Via an isomorphism between $TS^{2}$ and $T^{\ast}S^{2}$ obtained by fixing a metric on $S^{2}$,
we may implant the model Dehn twist into $X$ as well as the $(-2)$-sphere case above.
We denote by $T_{S} : X \to X$ also this diffeomorphism, and call $T_{S}$ the Dehn twist as well.
This Dehn twist also generates an order 2 subgroup of $\pi_{0}(\Diff(X))$, since the corresponding statement for a $(-2)$-sphere follows just from a property of the model Dehn twist, and the action on $H_{2}(X)$ is given by 
\[
(T_S)_\ast(x) = x - (x \cdot [S])[S].
\]

We note that every Dehn twist preserves every spin structure:

\begin{lem}
\label{lem: Dehn twists preserve spin structures}
Let $X$ be a closed oriented smooth 4-manifold, and suppose that $X$ admits a spin structure $\fraks$.
Let $S$ be a $(+2)$- or $(-2)$-sphere in $X$.
Then the Dehn twist $T_{S}$ preserves $\fraks$.
\end{lem}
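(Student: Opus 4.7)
The plan is to prove $T_S^*\fraks\cong \fraks$ by showing that the difference class between the two spin structures, which a priori lies in $H^1(X;\Z/2)$, vanishes for local-to-global cohomological reasons.

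First I would note that, since $T_S$ equals the identity outside a closed tubular neighborhood $\bar N$ of $S$, the spin structures $\fraks$ and $T_S^*\fraks$ on $TX$ are literally equal on the open subset $X\setminus \bar N$. Consequently their difference class $\delta \in H^1(X;\Z/2)$ restricts to zero in $H^1(X\setminus \bar N;\Z/2)$, so by the long exact sequence of the pair $(X,X\setminus \Int \bar N)$, the class $\delta$ lies in the image of $H^1(X, X\setminus \Int \bar N;\Z/2)$.

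The next step is to compute this relative group. By excision it is isomorphic to $H^1(\bar N,\partial \bar N;\Z/2)$, and $\bar N$ is the total space of the closed disk bundle of the rank-$2$ real normal bundle $\nu \to S$. The $\Z/2$-coefficient Thom isomorphism, which applies to every real vector bundle with no orientability hypothesis, then yields
\[
H^{1}(\bar N,\partial \bar N;\Z/2) \cong H^{-1}(S;\Z/2) = 0,
\]
so $\delta = 0$ and $T_S$ preserves the isomorphism class of $\fraks$. The $(+2)$- and $(-2)$-sphere cases are handled uniformly, because the self-intersection sign (the Euler number of $\nu$) only enters the Thom isomorphism through a coefficient twist, which is trivial over $\Z/2$.

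I do not anticipate a serious obstacle. The only point requiring care is the standard torsor-theoretic fact that the isomorphism classes of spin structures on $TX$ form an $H^1(X;\Z/2)$-torsor compatibly with restriction, so that two spin structures literally agreeing on an open $U\subseteq X$ have difference class vanishing in $H^1(U;\Z/2)$.
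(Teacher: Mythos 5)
Your proof is correct and follows essentially the same route as the paper's: both reduce the lemma to the vanishing of $H^1(\bar N,\partial\bar N;\Z/2)$, computed via the mod 2 Thom isomorphism for the normal disk bundle of $S$. The only cosmetic difference is that you phrase the reduction via the $H^1(X;\Z/2)$-torsor structure and the long exact sequence of the pair $(X,X\setminus\Int\bar N)$, whereas the paper phrases it as uniqueness of extensions of $\fraks|_{\partial\bar N}$ across $\bar N$ via relative obstruction theory for $B(\Z/2)\to B\mathrm{Spin}(4)\to B\mathrm{SO}(4)$; these are two standard ways of saying the same thing.
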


\begin{proof}
Recall that $T_{S}$ is just the identity map on the complement of a tubular neighborhood of $S$ in $X$, which is diffeomorphic to the disk cotangent bundle $D(T^{\ast}S^{2})$.
Thus it suffices to show that, given a spin structure $\frakt$ on $\del D(T^{\ast}S^{2}) = S(T^{\ast}S^{2})$, an extension of $\frakt$ to $D(T^{\ast}S^{2})$ is unique.
By the relative obstruction theory for a natural fibration $B(\Z/2) \to B\mathrm{Spin}(4) \to B\mathrm{SO}(4)$,
it follows that the extensions of $\frakt$ are classified by $H^{1}(D(T^{\ast}S^{2}), S(T^{\ast}S^{2}); \Z/2)$, which is the trivial group by the mod 2 Thom isomorphism for $T^{\ast}S^{2} \to S^{2}$.
This completes the proof.
\end{proof}

\subsection{Proof of \cref{theo: main 1}}
Now we are ready to prove our main result on Dehn twists:

\begin{proof}[Proof of \cref{theo: main 1}]
By reversing the orientation, we may suppose that $\sigma(X)<0$.
Note that a $(\pm2)$-sphere turns into a $(\mp2)$-sphere if we reverse the orientation of $X$.
First we consider the case that  a $(-2)$-sphere is given in $X$ with $\sigma(X)<0$.
Let $S$ be a $(-2)$-sphere, and let $\varphi$ denote the induced automorphism of $H_{2}(X;\Z)$ from the Dehn twist $T_{S}$.
Let us calculate $b_{+}^{\varphi}$, $b_{-}^{\varphi}$, and $\sigma^{\varphi}$.
As descibed above, $\varphi$ is given by $\varphi(x) = x + (x \cdot [S])[S]$, namely, $\varphi$ acts on $H_{2}(X)$ as the reflection with respect to the orthogonal complement of the subspace generated by $[S]$.
Here the orthogonal complement is with respect to the intersection form, and hence the complement contains a maximal-dimensional positive-definite subspace.
Thus we have 
\[
b_{+}^{\varphi}(X) = b_{+}(X), \quad b_{-}^{\varphi}(X) = b_{-}(X)-1, \quad \sigma^{\varphi}(X) = \sigma(X)+1.
\]
From this we have that $\sigma^{\varphi}(X) \neq \sigma(X)/2$, since we supposed that $\sigma(X) < 0$ and hence $|\sigma(X)| \geq 8$ since $H_{2}(X;\Z)$ is an even lattice.
Moreover, we also have $-\sigma(X)/16 > b_{+}(X)-b_{+}^{\varphi}(X)$ again by $\sigma(X)<0$.
Now the claim of \cref{theo: main 1} for $(-2)$-spheres in $X$ with $\sigma(X)<0$ follows from \cref{theo: constraint from Kato} combined with \cref{lem: Dehn twists preserve spin structures}.

Next, we consider the case that a $(+2)$-sphere $S$ in $X$ with $\sigma(X)<0$ is given.
Note that, as in the $(-2)$-sphere case above, $\varphi=(T_{S})_{\ast}$ is the reflection with respect to the orthogonal complement of the subspace generated by $[S]$, but now $[S]$ has positive self-intersection.
Thus we have 
\[
b_{+}^{\varphi}(X) = b_{+}(X)-1, \quad b_{-}^{\varphi}(X) = b_{-}(X), \quad \sigma^{\varphi}(X) = \sigma(X)-1.
\]
Again because of $|\sigma(X)| \geq 8$, it follows from this that $\sigma^{\varphi}(X) \neq \sigma(X)/2$.
Moreover, we have $b_{+}(X)-b_{+}^{\varphi}(X) = 1 < -\sigma(X)/16+1$.
Now the desired claim follows from the ``moreover'' part of \cref{theo: constraint from Kato} combined with \cref{lem: Dehn twists preserve spin structures}, and this completes the proof.
\end{proof}

Note that, the ``moreover'' part of \cref{theo: constraint from Kato}, which was derived from the refined Kato's inequality (\cref{theo: Kato 108 refine}), was effectively used to deal with $(+2)$-spheres in $X$ with $\sigma(X)<0$ in the above proof of \cref{theo: main 1}.

\begin{rem}
\label{rem: comparison with FL}
For $X=K3$, the above proof of \cref{theo: main 1} gives an alternative proof of \cite[Corollary~1.10]{FL21} by Farb and Looijenga.
They gave two different proofs of \cite[Corollary~1.10]{FL21}, and one of them is based on Seiberg--Witten theory. 
We also used Seiberg--Witten theory, but in a slightly different manner: our proof uses Kato's result \cite{Ka17}, rather than a result due to Bryan \cite{Bry98} used by Farb and Looijenga.

Kato's inequality \eqref{eq: Kato's inequality} is useful to obtain a result for general spin 4-manifolds as in \cref{theo: main 1} not only $K3$. 
This is essentially because $b_{+}$ is replaced with $b_{+}-b_{+}^{\iota}$ in Kato's inequality \eqref{eq: Kato's inequality}.
\end{rem}

\begin{ex}
\label{ex: intro}
\cref{theo: main 1} tells us that quite many spin 4-manifolds $X$ have (many) non-realizable order 2 subgroups of $\pi_{0}(\Diff(X))$.
Indeed, there are many spin 4-manifolds that admit $(\pm2)$-spheres.
For example, $S^{2} \times S^{2}$ admits both of $(+2)$- and $(-2)$-spheres.
A $K3$ surface, more generally, a spin complete intersection surface $M$ admits a $(-2)$-sphere.
Except for $M=S^{2} \times S^{2}$ we have $\sigma(M)<0$ for such $M$, and thus we may apply \cref{theo: main 1} to $M$ and obtain a non-realizable subgroup,  and, of course, we may apply \cref{theo: main 1} also to the connected sum of $M$ with any spin 4-manifold with $\sigma \leq 0$.
(For the fact that $M$ contains a $(-2)$-sphere, see the proof of Theorem~1.5 in \cite[page 255]{Sei08}. In fact, one may find a Lagrangian sphere in $M$, whose self-intersection is always $-2$.
See also \cite[pages 23--24]{GS99} for the topology of $M$, including when a complete intersection is spin.)
\end{ex}

\section{Proof of \cref{theo: top smooth Nielsen}}
\label{section Proof of theo: top smooth Nielsen}

Given an oriented closed simply-connected smooth 4-manifold $X$, let $\Aut(H_{2}(X;\Z))$ denote the automorphism group of $H_{2}(X;\Z)$ equipped with the intersection form.
Since the space of maximal-dimensional positive-definite subspaces of $H^{2}(X;\R)$ is known to be contractible, it makes sense whether a given $\varphi \in \Aut(H_{2}(X;\Z))$ preserves a given orientation of the positive part of $H^{2}(X;\R)$.
Let us recall the following classical fact:

\begin{theo}[\cite{Do90,Matu86,Bor86}]
\label{theo: K3 image}
Let $\Gamma(K3) \subset \Aut(H_{2}(K3;\Z))$ denote the image of the natural map
$\pi_{0}(\Diff(K3)) \to \Aut(H_{2}(K3;\Z))$.
Then $\Gamma(K3)$ is the index 2 subgroup of $\Aut(H_{2}(K3;\Z))$ which consists of automorphisms that preserve a given orientation of $H^{+}(K3)$.
\end{theo}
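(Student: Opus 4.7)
The theorem has two essentially independent halves: first, that $\Gamma(K3)$ is contained in the index $2$ subgroup preserving a given orientation of $H^{+}(K3)$; second, that every such proper isometry of the intersection form is induced by some diffeomorphism. My plan is to treat them separately.

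For the containment, I would use Seiberg--Witten theory. Since $K3$ is simply connected and spin, it admits a spin$^{c}$ structure $\fraks_{0}$ with $c_{1}(\fraks_{0})=0$, unique up to isomorphism, and the Seiberg--Witten invariant $\SW(\fraks_{0}) = \pm 1$ is non-zero. The invariant depends on a homology orientation, which for a simply-connected $4$-manifold is equivalent to an orientation of $H^{+}$, so by naturality any diffeomorphism $f$ satisfies
\[
\SW(f^{\ast}\fraks_{0}) = \epsilon_{f} \cdot \SW(\fraks_{0}),
\]
where $\epsilon_{f}\in\{\pm 1\}$ records whether $f$ preserves the chosen orientation of $H^{+}$. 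Since $f^{\ast}\fraks_{0}\cong\fraks_{0}$ by uniqueness, if $f$ reversed this orientation we would obtain $\SW(\fraks_{0}) = -\SW(\fraks_{0}) = 0$, a contradiction. This is essentially Donaldson's original approach~\cite{Do90}, with Donaldson polynomial invariants in place of Seiberg--Witten invariants.

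For surjectivity onto the index $2$ subgroup, I would combine algebraic geometry with smooth topology. Given a proper isometry $\varphi$ of the $K3$ lattice, the plan is: first, use surjectivity of the period map for marked $K3$ surfaces to equip $K3$ with a complex structure making $\varphi$ a Hodge isometry; second, invoke the strong global Torelli theorem to realize this Hodge isometry by a biholomorphism, provided the K\"{a}hler cone is preserved; and third, correct by compositions of Dehn twists about Lagrangian $(-2)$-spheres from nodal degenerations (as reviewed in \cref{subsection Dehn twist about pm2spheres}) to send the transformed K\"{a}hler cone back into the original one.

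The main obstacle is the third step: one must know that the Weyl group generated by reflections in $(-2)$-vectors acts transitively on the K\"{a}hler chambers of the positive cone of the hyperbolic lattice $H^{1,1}\cap H^{2}(K3;\Z)$, so that finitely many Dehn twists suffice. This chamber transitivity is a classical lattice-theoretic fact for the $K3$ lattice $2(-E_{8})\oplus 3H$, and marrying it with global Torelli and the smooth realization of $(-2)$-reflections as Dehn twists yields surjectivity, as originally proven by Borcea~\cite{Bor86} and Matumoto~\cite{Matu86}.
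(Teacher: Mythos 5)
The paper does not prove this theorem; it is quoted with citations to Donaldson~\cite{Do90}, Matumoto~\cite{Matu86}, and Borcea~\cite{Bor86}, so there is no internal proof to compare against. I therefore assess your sketch on its own terms.

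The containment half ($\Gamma(K3)$ lies inside the $O^+$-subgroup) is correct as you present it. The Seiberg--Witten invariant of the unique spin$^c$ structure with $c_1=0$ on $K3$ is $\pm 1$ once a homology orientation (equivalently, an orientation of $H^+$, since $K3$ is simply connected) is fixed, and naturality of the invariant under diffeomorphisms forces that orientation to be preserved. As you acknowledge, Donaldson's actual argument used polynomial invariants, since Seiberg--Witten theory postdates all three cited references, but this is an inessential modernization.

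The surjectivity half has a genuine gap at the very first step. You propose to ``use surjectivity of the period map to equip $K3$ with a complex structure making $\varphi$ a Hodge isometry.'' This requires a period point $[\omega]\in\Omega$ with $\varphi_{\C}\,\omega=\lambda\omega$, equivalently a $\varphi$-invariant positive-definite real $2$-plane in $\Lambda_{K3}\otimes\R$. But not every element of the index-$2$ subgroup admits such a plane. For example, let $\varphi$ act as the identity on $2(-E_{8})\oplus H$ and on the remaining $H_{1}\oplus H_{2}$ (with standard hyperbolic bases $e_{i},f_{i}$) as the unipotent isometry
\[
e_{1}\mapsto e_{1}+e_{2},\quad e_{2}\mapsto e_{2},\quad f_{1}\mapsto f_{1},\quad f_{2}\mapsto f_{2}-f_{1}.
\]
Being unipotent, $\varphi$ lies in the identity component of $O(3,19)$ and hence in the $O^+$-subgroup. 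One checks $\ker(\varphi-\id)$ decomposes as the signature-$(1,17)$ summand $H\oplus 2(-E_8)$ plus the isotropic plane $\operatorname{span}(e_{2},f_{1})=\operatorname{im}(\varphi-\id)$. Any $\varphi$-invariant $2$-plane $V$ either lies in $\ker(\varphi-\id)$ — in which case its positive-definite part is at most $1$-dimensional — or satisfies $0\neq(\varphi-\id)V\subseteq V\cap\operatorname{span}(e_2,f_1)$, forcing $V$ to contain a nonzero isotropic vector. Either way $V$ is not positive-definite, so $\varphi$ admits no invariant period, and the rest of your argument (Torelli plus Weyl-group corrections) never gets started. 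The actual arguments of Borcea and Matumoto are organized differently: rather than realizing each $\varphi$ individually as a Hodge isometry, one typically uses local Torelli and surjectivity of the period map to count components of the moduli space of marked K3 surfaces, or (Matumoto) exhibits a concrete generating set of the target group and realizes each generator by a diffeomorphism. Your ingredients — period theory, global Torelli, Dehn twists realizing $(-2)$-reflections — are the right ones, but the assembly must be rethought to avoid requiring a $\varphi$-fixed period.
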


We shall also use:

\begin{theo}[{\cite[Theorem 1.1]{BK21}}]
\label{theo: K3 section}
There exists a (group-theoretic) section $s : \Gamma(K3) \to \pi_{0}(\Diff(K3))$ of the natural map $\pi_{0}(\Diff(K3)) \to \Aut(H_{2}(K3;\Z))$.
\end{theo}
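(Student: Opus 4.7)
My plan is to construct $s$ from the period map associated to Einstein metrics on the smooth manifold $K3$, thereby realizing $\Gamma(K3)$ as the monodromy group of the associated universal family. Let $\calE(K3)$ denote the space of unit-volume Einstein metrics with its natural $\Diff^{+}(K3)$-action by pullback, and set $\calT := \calE(K3)/\Diff_{0}(K3)$ for the Einstein Teichm\"uller space, which carries a residual action of $\pi_{0}(\Diff^{+}(K3))$. Assigning to $g$ its positive $3$-plane of $g$-self-dual harmonic forms inside $H^{2}(K3;\R)$ yields a period map
\[
P : \calT \longrightarrow \mathrm{Gr}_{3}^{+}(H^{2}(K3;\R)),
\]
equivariant for the homomorphism $\pi_{0}(\Diff^{+}(K3)) \twoheadrightarrow \Gamma(K3)$ of \cref{theo: K3 image}.

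The key step I would establish is a smooth Torelli-type statement, assembled from Hitchin's identification of Einstein metrics on $K3$ with hyperk\"ahler metrics, Verbitsky's global Torelli for hyperk\"ahler manifolds, and the Anderson--Kronheimer--LeBrun analysis of degenerations in the Einstein moduli space: a connected component $\calT_{0} \subset \calT$ maps \emph{bijectively} under $P$ onto the complement
\[
\Omega^{\circ} := \mathrm{Gr}_{3}^{+}(H^{2}(K3;\R)) \setminus \bigcup_{\delta} \delta^{\perp}
\]
of the real-codimension-$3$ walls indexed by $(-2)$-classes $\delta \in H^{2}(K3;\Z)$, and moreover the $\Gamma(K3)$-action on $\Omega^{\circ}$ lifts to a $\pi_{0}(\Diff^{+}(K3))$-equivariant action on $\calT_{0}$.

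Granted this, the section is defined as follows. Fix a basepoint $[g_{0}] \in \calT_{0}$. For each $\varphi \in \Gamma(K3)$, set $s(\varphi) \in \pi_{0}(\Diff^{+}(K3))$ to be the unique mapping class carrying $[g_{0}]$ to the $\calT_{0}$-point above $\varphi \cdot P([g_{0}])$. Uniqueness is automatic because $P|_{\calT_{0}}$ is a bijection, and the homomorphism identity $s(\varphi\psi) = s(\varphi)s(\psi)$ then follows because both sides send $[g_{0}]$ to the unique lift of $(\varphi\psi) \cdot P([g_{0}])$.

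The main obstacle is the Torelli input. Even granting the holomorphic hyperk\"ahler Torelli theorem and Anderson's compactness, one must verify that $\calT$ has a component on which $P$ is \emph{injective} (not merely a covering) onto $\Omega^{\circ}$, which amounts to showing that the smooth Torelli subgroup acts by permuting components of $\calT$ rather than as a deck group on a fixed component. Controlling the wall behavior, where Einstein metrics degenerate to K\"ahler--Einstein orbifolds with $A_{1}$-singularities along collapsing $(-2)$-spheres, is the most delicate part of this step; once it is in place, the section itself follows by the purely formal argument above.
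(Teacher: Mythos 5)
Your overall strategy---the period map from the Einstein Teichm\"uller space of $K3$, global Torelli, and Anderson-type degeneration analysis to establish an equivariant identification with the wall-complement $\Omega^{\circ} \subset \mathrm{Gr}_{3}^{+}(H^{2}(K3;\R))$---is the approach taken in \cite{BK21}, so you have the right ingredients. One remark that would simplify your presentation: the issue you flag of distinguishing ``bijection'' from ``covering'' is easier than you suggest. The walls $\delta^{\perp}$ have real codimension $3$ in the contractible symmetric space $\mathrm{Gr}_{3}^{+}$, so $\Omega^{\circ}$ is simply connected; hence once the degeneration/properness analysis shows that $P$ restricted to a component is a covering map, it is automatically a homeomorphism. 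No separate ``strong Torelli'' statement is needed for that part.

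The genuine gap is in the step you call a ``purely formal argument.'' You define $s(\varphi)$ to be ``the unique mapping class carrying $[g_{0}]$ to the $\calT_{0}$-point above $\varphi \cdot P([g_{0}])$,'' but you have not argued that such a mapping class \emph{exists}: a priori every diffeomorphism $\phi$ with $\phi_{*}=\varphi$ could send $\calT_{0}$ to a \emph{different} component of $\calT$, in which case $\phi\cdot[g_{0}]$ is not the lift you want, and the set you are choosing a ``unique'' element from may be empty. What is actually needed is that the stabilizer $\Gamma_{0}$ of $\calT_{0}$ in $\pi_{0}(\Diff^{+}(K3))$ surjects onto $\Gamma(K3)$; combined with the freeness of the action at a metric with trivial isometry group (which kills $\Gamma_{0}\cap\ker$), this gives $\Gamma_{0}\cong\Gamma(K3)$ and hence the section. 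Note you cannot simply escape this by asserting $\calT$ is connected: if $\calT$ were connected, the bijection $P\colon\calT\to\Omega^{\circ}$ together with freeness of the action at generic points would force the Torelli subgroup to be trivial, which is far stronger than what is known. So the component-bookkeeping---showing the Torelli group acts \emph{simply transitively} on components, or equivalently that $\Gamma_{0}\twoheadrightarrow\Gamma(K3)$---is not a side remark but the crux, and it requires an actual argument (e.g.\ via the identification of the moduli space $\calT/\pi_{0}(\Diff^{+})$ with $\Omega^{\circ}/\Gamma(K3)$ established through the same Einstein-metric theory) rather than the formal lifting you give.
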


\begin{proof}[Proof of \cref{theo: top smooth Nielsen}]
First, we recall a construction of a topological involution $f_{K}$ on $K3$ (i.e. order 2 element of $\Homeo(K3)$) in \cite[Section~3]{BK21}.
Let $-E_{8}$ denote the negative-definite $E_{8}$-manifold, namely, simply-connected closed oriented topological 4-manifold whose intersection form is the negative-definite $E_{8}$-lattice.
Let $f_{S} : S^{2} \times S^{2} \to S^{2} \times S^{2}$ be the diffeomorphism defined by $(x,y) \mapsto (y,x)$.
Since $f_{S}$ has non-empty fixed-point set, which is of codimension-2, we can form an equivariant connected sum of three copies of $(S^{2} \times S^{2}, f_{S})$.
Take a point $x_{0}$ of $3S^{2} \times S^{2}$ outside the fixed-pioint set of $\#_{3}f_{S}$, and attach two copies of $-E_{8}$ with $3S^{2} \times S^{2}$ at $x_{0}$ and $(\#_{3}f_{S})(x_{0})$.
Now we have got a topological involution $\tilde{f} : 3S^{2} \times S^{2}\#2(-E_{8}) \to 3S^{2} \times S^{2}\#2(-E_{8})$.
Let $h : K3 \to 3S^{2} \times S^{2}\#2(-E_{8})$ be a homeomorphism obtained from 
Freedman theory \cite{Fre82}, and define $f_{K} : K3 \to K3$ by $f_{K} = h^{-1} \circ \tilde{f} \circ h$, which is a topological involution on $K3$.

Define a topological involution $f : X \to X$ by an equivariant connected sum $f = \#_{m} f_{K} \#_{n}f_{S}$ on $X = mK3 \# nS^{2}\times S^{2}$ along fixed points, which acts on homology as follows.
Recall that $H^{+}(S^{2} \times S^{2})$ is generated by $[S^{2} \times \mathrm{pt}] + [\mathrm{pt} \times S^{2}]$ and $H^{-}(S^{2} \times S^{2})$ is generated by $[S^{2} \times \mathrm{pt}] - [\mathrm{pt} \times S^{2}]$.
Hence $f_{0}$ acts trivially on $H^{+}(S^{2} \times S^{2})$, and acts on $H^{-}(S^{2} \times S^{2})$ by $(-1)$-multiplication, and thus we have
$b_{+}^{f_{S}}(S^{2} \times S^{2}) = 1$, $b_{-}^{f_{S}}(S^{2} \times S^{2}) = 0$,
and hence
\begin{align}
&b_{+}^{f_{K}}(K3) = 3,\quad b_{-}^{f_{K}}(K3) = 8,\quad \sigma^{f_{K}}(K3)=-5, 
\label{eq: invariant part of numerical invariants1}\\
&b_{+}^{f}(X) = 3m+n,\quad b_{-}^{f}(X) = 8m,\quad \sigma^{f}(X)=-5m+n.
\label{eq: invariant part of numerical invariants2}
\end{align}

It follows from \eqref{eq: invariant part of numerical invariants1} that $(f_{K})_{\ast}$ preserves an orientation of $H^{+}(K3)$, and hence $(f_{K})_{\ast}$ lies in $\Gamma(K3)$ by \cref{theo: K3 image}.
Using the section $s : \Gamma(K3) \to \pi_{0}(\Diff(K3))$ given in \cref{theo: K3 section},
set $\Phi = s((f_{K})_{\ast})$.
Then $\Phi$ is a non-trivial element of $\pi_{0}(\Diff(K3))$ of order 2, and hence a representative $g_{K} : K3 \to K3$ of $\Phi$ is a diffeomorphism whose square $g_{K}^{2}$ is smoothly isotopic to the identity.
By smooth isotopy, we may take $g_{K}$ so that $g_{K}$ pointwise fixes a 4-disk in $K3$.
Similarly, we may obtain a diffeomorphism $g_{S}$ of $S^{2} \times S^{2}$ which is smoothly isotopic to $f_{S}$ and which fixes a 4-disk pointwise.
Fixing disjoint disks $D_{1}^{4}, \ldots, D_{m+n}^{4}$ in $S^{4}$,
form a diffeomorphism
\[
g = \#_{m} g_{K} \#_{n}g_{S} : X \to X
\]
by attaching $g_{K}$'s and $g_{S}$'s with $(S^{4}, \id_{S^{4}})$ along the fixed disks of $g_{K}$'s and $g_{S}$'s and $D_{1}^{4}, \ldots, D_{m+n}^{4}$.
It is clear that $g$ is supported outside $S^{4}_{0} := S^{4} \setminus \sqcup_{i=1}^{m+n}D_{i}^{4}$.

We claim that $g^{2}$ is smoothly isotopic to the identity.
First, for a simply-connected closed oriented 4-manifold $M$, let $\Diff(M, D^{4})$ denote the group of diffeomorphisms fixing pointwise an embedded 4-disk $D^{4}$ in $M$.
It follows from \cite[Proposition~3.1]{Gian08} that we have an exact sequence
\[
1 \to \ker{p} \to \pi_{0}(\Diff(M, D^{4})) \xrightarrow{p} \pi_{0}(\Diff(M)) \to 1,
\]
where $p$ is an obvious homomorphism and $\ker{p}$ is isomorphic to either $\Z/2$ or $0$, which is generated by the mapping class of the Dehn twist $\tau_{M}$ along the 3-sphere parallel to the boundary.
Set $\tau_{K} = \tau_{K3}$ and $\tau_{S} = \tau_{S^{2} \times S^{2}}$.
Note that the relative mapping class $[\tau_{K}]_{\del}$ is non-trivial in $\pi_{0}(\Diff(K3, D^{4}))$ by \cite[Proposition~1.2]{KM20}, while $[\tau_{S}]_{\del}$ is trivial since $\tau_{S}$ can be absorbed into the $S^{1}$-action on $S^{2} \times S^{2}$ given by the rotation of one $S^{2}$-component.
Thus we obtain from $[g_{K}]^{2}=1$ and $[g_{S}]^{2}=1$ that $[g_{K}]_{\del}^{2} = [\tau_{K}]_{\del}\neq1$ and $[g_{S}]_{\del}^{2}=1$.
Hence $[g]^{2}$ is the product of the Dehn twists along necks between $m$-copies of $K3$ and $S^{4}_{0}$.

On the other hand, let $\tau_{S^{4}_{0}} : S^{4}_{0} \to S^{4}_{0}$ be the diffeomorphism defined as the simultaneous Dehn twists near all $\del D_{i}^{4}$.
It follows from \cref{lem: S4 rotation} below that $\tau_{S^{4}_{0}}$ is smoothly isotopic to the identity relative to $\del S^{4}_{0}$.
Thus we have $[g]^{2} = [(\tau_{S^{4}_{0}} \# \id_{X \setminus S^{4}_{0}})\circ g^{2}]$.
Note that $\tau_{S^{4}_{0}}$ restricted to the neck between each $K3$ and $S^{4}_{0}$ cancels the Dehn twist $\tau_{K}$, but $\tau_{S^{4}_{0}}$ yields the Dehn twist on each of the necks between $S^{2} \times S^{2}$'s and $S^{4}_{0}$.
As a result, $[g]^{2}$ is the product of the Dehn twists along the necks between all of $S^{2} \times S^{2}$ and $S^{4}_{0}$.
But each of these Dehn twists can be absorbed into the rotation of $S^{2} \times S^{2}$ as above.
Thus we get $[g]^{2}=1$.

Let $G$ be the subgroup of $\pi_{0}(\Diff(X))$ generated by the mapping class $[g]$.
We claim that this group $G$ is the desired one.
First, by construction, we have $g_{\ast} = f_{\ast}$ on $H_{2}(X;\Z)$.
By a theorem of Quinn~\cite{Q86} and Perron~\cite{P86},
this implies that $g$ and $f$ are topologically isotopic to each other.
Thus the image $G'$ of $G$ under the map $\pi_{0}(\Diff(X)) \to \pi_{0}(\Homeo(X))$ lifts to the order 2 subgroup of $\Homeo(X)$ generated by $f$.
Since $G'$ is a non-trivial group as $g$ acts homology non-trivially,
this proves the statement on $G'$ in the \lcnamecref{theo: top smooth Nielsen}.

What remains to prove is that $g$ is not homotopic to any finite order diffeomorphism of $X$.
However, using $g_{\ast}=f_{\ast}$, \eqref{eq: invariant part of numerical invariants2}, and $m>0$, it is straightforward to see that $\varphi=g_{\ast}$ violates the inequality \eqref{eq: necessary condition} and that $\sigma^{\varphi}(X) \neq \sigma(X)/2$.
Thus the desired assertion follows from \cref{theo: constraint from Kato}.
This completes the proof of the \lcnamecref{theo: top smooth Nielsen}.
\end{proof}

The following \lcnamecref{lem: S4 rotation} and how to use it  in the proof of \cref{theo: top smooth Nielsen} were suggested to the author by David Baraglia:

\begin{lem}
\label{lem: S4 rotation}
Let $N >0$ and let $S^{4}_{0}$ be a $N$-punctured 4-sphere, $S^{4}_{0} = S^{4} \setminus \sqcup_{i=1}^{N}D_{i}^{4}$.
Let $\tau_{S^{4}_{0}} : S^{4}_{0} \to S^{4}_{0}$ be the diffeomorphism defined as the simultaneous Dehn twists near all $\del D_{i}^{4}$.
Then $\tau_{S^{4}_{0}}$ is smoothly isotopic to the identity relative to $\del S^{4}_{0}$.
\end{lem}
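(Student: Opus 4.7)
The plan is to construct the desired isotopy from an $SO(2)$-rotation of the ambient $S^{4}$, compensated near $\partial S^{4}_{0}$ by a counter-rotation that converts the resulting loop of diffeomorphisms of $S^{4}_{0}$ into one that is rel boundary.

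First I would realize $S^{4}$ as the unit sphere in $\R^{5}$ and let $\phi_{\theta} \in \Diff(S^{4})$ denote the rotation by angle $\theta$ in the $(x_{1},x_{2})$-plane, fixing the remaining coordinates; this is a loop in $\Diff(S^{4})$ based at $\id$, whose fixed set is the 2-sphere $\{x_{1}=x_{2}=0\}\cap S^{4}$. Since the isotopy class of $\tau_{S^{4}_{0}}$ rel $\partial S^{4}_{0}$ depends only on the underlying manifold with boundary, I may choose the disks $D^{4}_{i}$ to be small $S^{1}$-invariant balls centered at $N$ distinct fixed points $p_{1},\dots,p_{N}$ on this 2-sphere. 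Each $D^{4}_{i}$ is then preserved by $\phi_{\theta}$, so restriction gives a loop of self-diffeomorphisms of $S^{4}_{0}$ based at $\id$; however, this loop is not rel boundary, since on each $\partial D^{4}_{i}$ it acts by the rotation $R(\theta)$ in the $(x_{1},x_{2})$-directions of $T_{p_{i}}S^{4}$.

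Next I would compensate: fix a collar $\partial S^{4}_{0} \times [0,1] \hookrightarrow S^{4}_{0}$ and a smooth function $\rho:[0,1]\to[0,1]$ with $\rho(0)=1$, $\rho(1)=0$, constant near the endpoints. Define $\psi_{\theta}: S^{4}_{0} \to S^{4}_{0}$ by setting it equal to $(x,s) \mapsto (R(-\rho(s)\theta)x, s)$ on each collar component and to the identity outside the collars. Then $\Phi_{\theta} := \psi_{\theta} \circ \phi_{\theta}$ is a smooth path in $\Diff(S^{4}_{0})$ rel $\partial S^{4}_{0}$ starting at $\Phi_{0}=\id$. Since $\phi_{2\pi}=\id$ on all of $S^{4}$, the endpoint $\Phi_{2\pi}=\psi_{2\pi}$ acts on each collar as $(x,s) \mapsto (R(-2\pi\rho(s))x, s)$. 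The loop $s \mapsto R(-2\pi\rho(s))$ is homotopic rel endpoints in $SO(4)$ to the standard full rotation loop, hence represents the non-trivial class in $\pi_{1}(SO(4))=\Z/2$; consequently $\psi_{2\pi}$ is, on each collar, a model Dehn twist and is isotopic rel boundary to $\tau_{S^{4}_{0}}$. Concatenating the isotopy $\{\Phi_{\theta}\}_{\theta\in[0,2\pi]}$ with this last identification yields the desired isotopy.

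The main technical point I expect is verifying smoothness across the collar and confirming that $\Phi_{2\pi}$ matches the specific representative of the model Dehn twist used to define $\tau_{S^{4}_{0}}$. Both reduce to routine bookkeeping: smoothness follows from the standard bump-function arrangement, and the model Dehn twist is determined up to isotopy by the class of its defining loop in $\pi_{1}(SO(4))$. The conceptual content is the observation that a full $2\pi$-rotation of $S^{4}$ in the plane containing the punctures simultaneously realizes a Dehn twist around every puncture, while being manifestly the trivial loop in $\Diff(S^{4})$.
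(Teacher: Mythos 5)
Your proof is correct and uses the same essential idea as the paper's: the $S^{1}$-rotation of $S^{4}$ in a plane whose fixed $2$-sphere contains the centers of all the removed disks provides, after trivially restricting, a null-homotopy of the simultaneous Dehn twist. The only difference is bookkeeping: the paper removes slightly larger disks $\hat{D}^{4}_{i}\supset D^{4}_{i}$, runs the rotation on $S^{4}\setminus\sqcup\hat{D}^{4}_{i}$, and glues it to a separate isotopy from the model Dehn twist to the identity on each annulus $\hat{D}^{4}_{i}\setminus D^{4}_{i}$, whereas you run the rotation on all of $S^{4}_{0}$ and instead compensate with a damped counter-rotation supported in the boundary collar, reading off the residual twist at $\theta=2\pi$. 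Both implementations realize the same computation in $\pi_{1}(\SO(4))\cong\Z/2$, and your collar-damping version is, if anything, a bit more self-contained since it sidesteps the need to match two separately constructed families along $\partial\hat{D}^{4}_{i}$.
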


\begin{proof}
Regard $S^{4}$ as the unit sphere of $\R^{5}=\R^{2} \oplus \R^{3}$, and
let $S^{1}$ act on $S^{4}$ by the standard rotation of $\R^{2}$-component.
The fixed-point set $\Sigma$ of the $S^{1}$-action is given by $S(0 \oplus \R^{3}) \cong S^{2}$.
We may assume that $D_{i}^{4}$ are embedded disks in $S^{4}$ whose centers $p_{i}$ are on $\Sigma$.
Then the normal tangent space $N_{p_{i}}$ of $\Sigma$ at $p_{i}$ in $S^{4}$ is acted by $S^{1}$ as the standard rotation.

Pick a disk $\hat{D}_{i}^{4}$ in $S^{4}$ that contains $D_{i}^{4}$ so that $\hat{D}_{i}^{4} \setminus D_{i}^{4}$ is diffeomorphic to the annulus $S^{3} \times [0,1]$.
Set $\hat{S}_{0}^{4}  = S^{4} \setminus \sqcup_{i=1}^{N}\hat{D}_{i}^{4}$.
The $S^{1}$-action on $S^{4}$ gives rise to an isotopy $\{\varphi_{t}\}_{t \in [0,1]} \subset \Diff(\hat{S}_{0}^{4})$ from $\id_{\hat{S}_{0}^{4}}$ to itself such that $\{\varphi_{t}|_{\del \hat{D}_{i}^{4}}\}_{t}$ gives the homotopically non-trivial loop in $\mathrm{SO}(4) \subset \Diff(S^{3}) \cong \Diff(\del \hat{D}_{i}^{4})$.

On the other hand, recall that the Dehn twist $\tau$ on $S^{3} \times [0,1]$ is defined by $\tau(y,t) = (g(t)\cdot y,t)$, where $g : [0,1] \to \mathrm{SO}(4)$ is the homotopically non-trivial loop in $\mathrm{SO}(4)$.
By definition, $\tau$ is isotopic to $\id_{S^{3} \times [0,1]}$ by an isotopy
\[
\psi_{t} \in \Diff(S^{3} \times [0,1], S^{3} \times \{1\}),
\]
through the diffeomorphism group fixing $S^{3} \times \{1\}$ pointwise,
such that $\{\psi_{t}|_{S^{3} \times \{0\}}\}_{t}$ gives the homotopically non-trivial loop in $\Diff^{+}(S^{3})$.

Let $\psi_{t}^{i}$ be copies of $\psi_{t}$, regarded as isotopies on $\hat{D}_{i}^{4} \setminus D_{i}^{4}$. 
By gluing $\varphi_{t}$ with $\psi_{t}^{i}$ along $\sqcup_{i=1}^{N}\del \hat{D}_{i}^{4}$, we obtain an isotopy from $\tau_{S_{0}^{4}}$ to $\id_{S_{0}^{4}}$ relative to $\del S_{0}^{4}$.
\end{proof}

\begin{rem}
For $X=K3$, the above proof of \cref{theo: top smooth Nielsen} gives a slight alternative proof  of \cite[Theorem~1.2]{BK21}, which used the adjunction inequality rather than Kato's result~\cite{Ka17}.
\end{rem}

\section{Additional remarks}
\label{section Additional remarks}

\subsection{Another kind of Dehn twists}

A kind of Dehn twists different from that in \cref{theo: main 1} is the Dehn twist along an embedded annulus $S^{3} \times [0,1]$ in a 4-manifold, defined using the generator of $\pi_{1}(\mathrm{SO}(4)) \cong \Z/2$, as described in the previous section.
The square of the Dehn twist of this kind is smoothly isotopic to the identity.
Recently, Kronheimer and Mrowka~\cite{KM20} proved that the Dehn twist $\tau$ along the neck of $K3\#K3$ is not smoothly isotopic to the identity, and J.~Lin~\cite{JL20} showed that the extension of $\tau$ to $K3\#K3\#S^{2} \times S^{2}$ by the identity of $S^{2} \times S^{2}$ is also not smoothly isotopic to the identity.
Hence it turns out that these Dehn twists generate order 2 subgroups of the mapping class groups.
We remark that these subgroups also give counterexamples to the Nielsen realization problem:

\begin{prop}
\label{prop: more dehn twists}
We have the following:
\begin{itemize}
\item[(i)] 
Let $\tau$ be the Dehn twist along the neck of $K3\#K3$.
Then the order 2 subgroup of $\pi_{0}(\Diff(K3\#K3))$ generated by the mapping class of $\tau$ is not realized in $\Diff(K3\#K3)$.
\item[(ii)] 
Let $\tau'$ be the extension of $\tau$ by the identity to $K3\#K3\#S^{2} \times S^{2}$.
Then the order 2 subgroup of $\pi_{0}(\Diff(K3\#K3\#S^{2} \times S^{2}))$ generated by the mapping class of $\tau'$ is not realized in $\Diff(K3\#K3\#S^{2} \times S^{2})$.
\end{itemize}
\end{prop}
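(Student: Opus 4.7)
The plan is to adapt the proof of \cref{theo: main 1} to this setting, but here the situation is in some sense simpler, because the subgroup $G$ we want to rule out is already cyclic of order $2$: realizing $G$ in $\Diff(X)$ (where $X = K3\#K3$ in (i) or $X = K3\#K3\#S^{2}\times S^{2}$ in (ii)) is equivalent to producing a smooth involution $g : X \to X$ with mapping class $[g]=[\tau]$ (resp.\ $[g]=[\tau']$). So there is no need to go through the reduction from general finite order diffeomorphisms used in the proof of \cref{theo: constraint from Kato}; we can directly assume $g$ is an involution and argue for a contradiction via Kato's $10/8$-type inequality.

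The first step is to compute the action on $H_{2}$. By construction, the neck Dehn twist $\tau$ (and therefore its extension $\tau'$) is supported in a collar $S^{3}\times[0,1]$, so it is the identity outside a region whose second homology vanishes; hence $\tau_{\ast} = \id$ and $\tau'_{\ast}=\id$ on $H_{2}(X;\Z)$. Next, since $X$ is simply-connected and spin, there is a unique spin structure $\fraks$, and in particular every self-diffeomorphism preserves it; in addition, because $\sigma(X) = -32 \neq 0$, every self-diffeomorphism of $X$ is orientation-preserving. So if $g$ is an involution realizing $[\tau]$, then $g$ satisfies the hypotheses of \cref{theo: Kato 108} provided that $g$ is of odd type.

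The parity step is where Kato's inequality will bite. Since $g_{\ast}=\id$, we have $\sigma^{g}(X)=\sigma(X)=-32$, which is not equal to $\sigma(X)/2 = -16$, so \cref{lem: even involution Gsignature} rules out $g$ being of even type; hence $g$ is of odd type. Applying \cref{theo: Kato 108} then gives
\[
2 \;=\; -\frac{\sigma(X)}{16} \;\leq\; b_{+}(X) - b_{+}^{g}(X) \;=\; 0,
\]
using $g_{\ast}=\id$, which is absurd. This contradiction proves both (i) and (ii) simultaneously.

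In this argument the main conceptual point, rather than an obstacle, is the observation that a realizing $g$ must automatically act trivially on $H_{2}$; this forces $b_{+}(X) - b_{+}^{g}(X) = 0$, and the negativity of $\sigma(X)$ then rules out the parities one by one (even type by \cref{lem: even involution Gsignature}, odd type by \cref{theo: Kato 108}). Since $\tau'$ acts on $H_{2}(S^{2}\times S^{2})$ as the identity as well, nothing extra is needed for (ii), and the refined inequality \cref{theo: Kato 108 refine} is not required here because the basic Kato inequality already contradicts $b_{+}(X) - b_{+}^{g}(X)=0$ when $\sigma(X)<0$.
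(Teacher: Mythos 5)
Your argument is correct, but it takes a genuinely different route from the paper's proof. You first correctly observe that realizing the order-$2$ group generated by $[\tau]$ (resp.\ $[\tau']$) in $\Diff(X)$ forces the existence of a smooth involution $g$ with $g_{\ast}=\id$ on $H_{2}(X;\Z)$, and that $g$ automatically preserves the (unique) spin structure and the orientation. From there you rule out even type via \cref{lem: even involution Gsignature} (since $\sigma^{g}(X)=\sigma(X)\neq\sigma(X)/2$ because $\sigma(X)=-32\neq 0$) and then rule out odd type via Kato's inequality \cref{theo: Kato 108}, since $g_{\ast}=\id$ gives $b_{+}(X)-b_{+}^{g}(X)=0$ while $-\sigma(X)/16=2>0$. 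This is in effect a direct, simplified re-run of the argument of \cref{theo: constraint from Kato} for the special case $\varphi=\id$, and it stays entirely within the Seiberg--Witten framework used elsewhere in the paper.

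The paper's own proof is shorter and uses different input: it invokes the theorem of Matumoto and Ruberman that a simply-connected closed spin $4$-manifold with non-zero signature admits no homologically trivial \emph{locally linear} involution. Since the neck Dehn twist is homologically trivial, a realization would produce exactly such an involution, and the contradiction is immediate. The Matumoto--Ruberman route is both more economical and strictly stronger --- it rules out locally linear (in particular topological) involutions, not just smooth ones --- whereas your approach only excludes smooth involutions, which is nonetheless sufficient for the statement. On the other hand, your argument has the modest virtue of being self-contained given the tools already developed in the paper, requiring no external theorem beyond Kato's inequality.
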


\begin{proof}
By the result by Matumoto~\cite{Mat92} and Ruberman~\cite{Rub95}, a simply-connected closed spin 4-manifold with non-zero signature does not admit a homologically trivial locally linear involution.
Since the Dehn twist $\tau$ is homologically trivial, the claim of the \lcnamecref{prop: more dehn twists} immediately follows.
\end{proof}

\subsection{Other variants of the realization problem}
Given a manifold $X$ of any dimension, one may also consider the realization problem for {\it infinite} subgroups of $\pi_{0}(\Diff(X))$ along $\Diff(X) \to \pi_{0}(\Diff(X))$ (or along $\Diff^{+}(X) \to \pi_{0}(\Diff^{+}(X))$ when $\Diff(X) \neq \Diff^{+}(X)$).
To answer this problem negatively, several authors developed cohomological obstructions, which can be thought of as descendants  of an argument started by Morita~\cite{Mo87} for surfaces.
In dimension 4, concrete results on the non-realization were obtained in \cite{Tshi15,GKT21} in this direction (see also \cite{Gian09}).
Concretely, Giansiracusa--Kupers--Tshishiku~\cite{GKT21} studied $X=K3$, and Tshishiku~\cite{Tshi15} considered manifolds of any dimension, but especially a result in \cite[Theorem~9.1]{Tshi15} treated 4-manifolds whose fundamental groups are isomorphic to non-trivial lattices, which does not have overlap with 4-manifolds that we considered in this paper.

Another variant of the realization problem is about the realization along the natural map $\Diff^{+}(X) \to \pi_{0}(\Homeo^{+}(X))$ for a subgroup of the image of this map.
If $X$ is a simply-connected 4-manifold, the natural map $\pi_{0}(\Homeo^{+}(X)) \to \Aut(H_{2}(X;\Z))$ is isomorphic \cite{Q86,P86}, and hence this version of realization problem is equivalent to the realization along the map $\Diff^{+}(X) \to \Aut(H_{2}(X;\Z))$, which has been extensively studied by Nakamura~\cite{Naka10}, Baraglia~\cite{B19,Ba21}, and Lee~\cite{Lee21,Lee22}.
As noted in \cref{section: main results}, \cref{theo: top smooth Nielsen} gives an alternative proof of \cite[Proposition~1.2]{B19} about the realization of an involution of $H_{2}(X;\Z)$.

\begin{acknowledgement}
The author thanks Jin Miyazawa and Masaki Taniguchi for stimulating discussions about Kato's work \cite{Ka17}, which helped him to get a feeling about \cite{Ka17}.
The author wishes to thank David Baraglia for pointing out a mistake in the proof of \cref{theo: top smooth Nielsen} in an earlier draft and suggesting a remedy for it based on \cref{lem: S4 rotation}.
The author also wishes to thank Seraphina Eun Bi Lee for explaining her work \cite{Lee21,Lee22}.
The author was partially supported by JSPS KAKENHI Grant Numbers 17H06461, 19K23412, and 21K13785.
\end{acknowledgement}

\bibliographystyle{plain}
\bibliography{mainref}

\end{document}